\def\XXint#1#2#3{{\setbox0=\hbox{$#1{#2#3}{\int}$ }
		\vcenter{\hbox{$#2#3$ }}\kern-.6\wd0}}
\newtheorem{claim}{Claim}
\newtheorem{lem}{Lemma}
\newtheorem{prop}{Proposition}[section]
\newtheorem{corr}{Corollary}[section]
\newtheorem{conj}{Conjecture}
\newtheorem{remark}{Remark}
\newtheorem{theorem}{Theorem}[section]
\newcommand{\del}{\partial}
\newcommand{\eps}{\varepsilon}
\newcommand{\ddbar}{\sqrt{-1}\partial\bar\partial}
\numberwithin{equation}{section}
\title[A free boundary Monge-Amp\`ere equation]{A free boundary Monge-Amp\`ere equation and applications to complete Calabi-Yau metrics}
\author{Tristan C. Collins}
\email{tristanc@math.toronto.edu}
\email{tristanc@math.mit.edu}
\address{Department of Mathematics, University of Toronto, 40 St. George Street, Toronto, ON, Canada}
\address{Department of Mathematics, Massachusetts Institute of Technology, 77 Massachusetts Ave., Cambridge, MA, USA}
\author{Freid Tong}
\email{ftong@cmsa.fas.harvard.edu}
\address{Center of Mathematical Sciences and Applications, Harvard University, 20 Garden St., Cambridge, MA, USA}
\author{Shing-Tung Yau}
  \email{styau@tsinghua.edu.cn}
\address{Department of Mathematics, Tsinghua University, Haidian District, Beijing 100084, China}
\begin{document}
\begin{abstract}
Let $P$ be a convex body containing the origin in its interior. We study a real Monge-Amp\`ere equation with singularities along $\del P$ which is Legendre dual to a certain free boundary Monge-Amp\`ere equation.  This is motivated by the existence problem for complete Calabi-Yau metrics on log Calabi-Yau pairs $(X, D)$ with $D$ an ample, simple normal crossings divisor.  We prove the existence of solutions in $C^{\infty}(P)\cap C^{1,\alpha}(\overline{P})$, and establish the strict convexity of the free boundary.  When $P$ is a polytope, we obtain an asymptotic expansion for the solution near the interior of the codimension $1$ faces of $\del P$.
\end{abstract}
	\maketitle
	
	\section{Introduction}

Suppose $X$ is a Fano manifold (i.e. $-K_{X}>0$), and suppose that $D\in|-K_{X}|$ is a divisor.  The complement $Y=X\setminus D$ admits a holomorphic volume form $\Omega$ which has a simple pole along $D$ and is naturally an open Calabi-Yau manifold of infinite volume.  It is a natural question, dating back to the third author's ICM address \cite{YauICM} to construct complete Calabi-Yau metrics on $Y$.  Such metrics, particularly those with sub-Euclidean volume growth, are fundamental for understanding the collapsing behavior of compact K\"ahler-Einstein manifolds; see, e.g. \cite{Biquard-Guenancia, HSVZ, SunZhang, Fu-Hein-Jiang} for some recent examples. 

The first major result concerning the existence of complete Calabi-Yau metrics is due to Tian and Yau \cite{Tian-Yau, Tian-Yau2}, who addressed the case when $D$ is smooth and irreducible.  The work of Tian-Yau develops a robust package for solving the complex Monge-Amp\`ere equation with ``good" asymptotics, extending to the non-compact case the techniques pioneered by the third author in the solution of the Calabi conjecture \cite{Yau}.  These results can be used to produce complete Calabi-Yau metrics in any case where a good ansatz metric at infinity is known and many examples are found in \cite{Tian-Yau, Tian-Yau2}.

Understanding the existence of complete Calabi-Yau metrics when $D$ is singular (eg. with simple normal crossings) is a long-standing problem in the field.  Hein \cite{Hein} constructed complete Calabi-Yau metrics on the complement of singular elliptic fibers in rational elliptic surfaces (for which $-K_X$ is effective, but not ample) utilizing the semi-flat metrics discovered by Greene-Shapere-Vafa-Yau \cite{GSVY}. Recently, the first author and Li \cite{Collins-Li} constructed complete Calabi-Yau metrics on $X\setminus D$ when $X$ is Fano, $\dim_{\mathbb{C}} X>2$, and $D=D_1+D_2 \in |-K_{X}|$ is a simple normal crossings divisor with $D_1 \equiv_{\mathbb{Q}} D_2$.  In \cite{Collins-Li} a general inductive strategy was proposed for constructing complete Calabi-Yau metrics on general Fano manifolds $X$ equipped with an snc anti-canonical divisor $D=D_1+\cdots D_k$ for $k < \dim_{\mathbb{C}}X$ and $D_i$ ample $1 \leq i \leq k$.  Roughly speaking, the basic idea of this inductive approach is that the Calabi-Yau metric on $X\setminus (D_1+\cdots +D_k)$ should asymptote near $D_i$ to the complete Calabi-Yau metric on $D_i\setminus (\sum_{j\ne i}^k D_j)$.  Technically, the inductive approach amounts to solving a certain real Monge-Amp\`ere equation with singularities along the boundary of a polytope and obtaining a complete asymptotic expansion for the solution near the boundary.  Crucially, the leading sublinear term in this expansion must be related to the solution of a related Monge-Amp\`ere equation in fewer variables.  The purpose of this paper is to address the existence of solutions to this real Monge-Amp\`ere equation and to show that near the generic point of the boundary, these solutions have leading order asymptotics determined by the asymptotics of the complete Calabi-Yau metrics constructed by Tian and Yau \cite{Tian-Yau}.    
	
To introduce our problem, let $P\subset \mathbb R^n$ be an open convex set containing the origin and consider the following boundary value problem
	\begin{equation}\label{eqn: main-equation}
		\begin{cases}
			\det D^2v = v^{-(n+2)}(-v^{\star})^{-k} & \text{ in } P\\
			v^{\star} = 0 & \text{ on } \partial P. 
		\end{cases}
	\end{equation}
	Here $v^{\star}$ is the function defined by\[v^{\star}(y):= \langle y, \nabla v(y)\rangle - v(y),\]which can be interpreted as the composition of the Legendre transform of $v$ with the gradient map $\nabla v$. Similar types of equations have been studied by the second and third authors in \cite{Tong-Yau}, where the focus was on the Dirichlet problem. The boundary condition in \eqref{eqn: main-equation} is the natural generalization of the boundary condition considered in \cite{Collins-Li}, and it asks for $v^{\star}$ to vanish on the boundary. This new boundary condition is equivalent to a free-boundary problem when re-casted in terms of the Legendre transform. Indeed if we let $u:\mathbb R^n\to \mathbb R$ be the Legendre transform of $v$,
    \[u(x) :=\sup_{y\in P}(\langle x, y\rangle-v(y)),\]
    then the equation~\eqref{eqn: main-equation} is equivalent to the following equation for $u$,
	\begin{equation}\label{eqn: legendre-transform-equation}
		\begin{cases}
			\det D^2u = (u^{\star})^{n+2}(-u)_+^{k} & \text{ in } \mathbb R^n\\
			\nabla u(\mathbb R^n) = \overline P,
		\end{cases}
	\end{equation}
	where $\chi_{\{u<0\}}$ is the characteric function of the set $\{u<0\}$. If we denote $\Omega = \{u<0\}$, then the equation~\eqref{eqn: legendre-transform-equation} is elliptic inside $\Omega$, while it is degenerate outside $\Omega$. The set $\partial\Omega = \{u = 0\}$ behaves like a free-boundary, and knowledge of $u$ inside $\Omega$ determines $u$ globally by taking the minimal convex extension of $u|_{\Omega}$. 
    \begin{remark}
    There is a separate transformation that we can apply to equation~\eqref{eqn: legendre-transform-equation}. Namely, if we define new coordinates $z\in \mathbb R^n$ by
    \[z = \frac{x}{-u(x)} \text{ for }x\in \Omega\]
    and a function of the new coordinate $\phi(z)$ by
    \[\phi(z) = \frac{1}{-u(x)},\]
    then by \cite[Section 5]{Tong-Yau}, the function $\phi:\mathbb R^n\to \mathbb R$ satisfies the Monge-Amp\`ere equation
    \[\det D^2\phi = \frac{c}{\phi^{n+2+k}}.\]
    This equation was studied by Klartag in \cite{Klartag}, where it was called the $k$-moment measure. Klartag solved this equation together with the second boundary condition. In our case, we would like $u$ to satisfy a second boundary condition, which in terms of $\phi$ is equivalent to the boundary condition \[\left\{\frac{\nabla\phi(z)}{-\phi^{\star}(z)}: z\in \mathbb R^n\right\} = P.\] 
    \end{remark}
	
	Our first main theorem is the following existence result for the Monge-Amp\`ere equation~\eqref{eqn: main-equation}. 
	\begin{theorem}\label{thm: main-theorem}
		Let $P$ be an open convex set that contains the origin, then the equation \eqref{eqn: main-equation} admits a solution $v\in C^{\infty}(P)\cap C^{1, \alpha}(\overline P)$ for some $\alpha>0$. 
	\end{theorem}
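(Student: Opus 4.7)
My plan is to work primarily with the Legendre dual formulation \eqref{eqn: legendre-transform-equation} on all of $\mathbb R^n$, where the singularity of \eqref{eqn: main-equation} along $\partial P$ becomes a free boundary $\partial\{u<0\}$, and to construct a solution by approximation combined with uniform a priori estimates and compactness. At the approximation level I would regularize the degenerate factor and consider, for $\epsilon>0$,
\[
\det D^2 u_\epsilon = (u_\epsilon^\star)^{n+2}\,\bigl(\epsilon+(-u_\epsilon)_+\bigr)^{k}\quad \text{on }\mathbb R^n,\qquad \nabla u_\epsilon(\mathbb R^n)=\overline{P}_\epsilon,
\]
where $P_\epsilon$ is a smooth strictly convex approximation of $P$. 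For fixed $\epsilon$ the equation is uniformly non-degenerate and of second boundary value type on the class of convex functions with a fixed gradient image. Existence at this level should follow either by a variational argument (minimizing a functional modeled on Klartag's $k$-moment measure functional \cite{Klartag}, modified to incorporate the $u^\star$ factor) or by a continuity method anchored on Klartag's theorem in the pure moment-measure case.

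Given the family $\{u_\epsilon\}$, the next step is to extract uniform a priori estimates. After translating so that $u_\epsilon$ attains its minimum at the origin, the constraint $\nabla u_\epsilon(\mathbb R^n)=\overline{P}_\epsilon$ and convexity give a uniform Lipschitz bound by $\mathrm{diam}(P)$, and for convex $u_\epsilon$ one has $u_\epsilon^\star\geq -u_\epsilon(0)>0$, providing a positive lower bound on $u_\epsilon^\star$ once $u_\epsilon(0)$ is controlled. The Aleksandrov maximum principle applied to the Monge-Amp\`ere equation, together with the fact that the total Monge-Amp\`ere mass equals $|P|$ up to a controlled factor, then yields a two-sided bound on $u_\epsilon(0)$. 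Interior Caffarelli-Pogorelov estimates give uniform $C^{2,\alpha}$ bounds on compact subsets of the elliptic region $\{u_\epsilon<0\}$, which via the Legendre correspondence translate to interior $C^{2,\alpha}$ bounds on $v_\epsilon$ on compact subsets of $P$. A standard bootstrap then upgrades this to the claimed $C^\infty(P)$ regularity of the limit.

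The heart of the argument, and the main obstacle, is to control the free boundary uniformly in $\epsilon$ and to propagate regularity up to $\partial P$. First, I would show that the elliptic region $\{u_\epsilon<0\}$ stays inside a fixed compact set of $\mathbb R^n$ by comparing $u_\epsilon$ from above with barriers built from affine functions with slopes ranging over $\partial P$, using $\nabla u_\epsilon(\mathbb R^n)=\overline{P}_\epsilon$. Second, to obtain a uniform $C^{1,\alpha}$ bound on $v_\epsilon$ up to $\partial P$, I would exploit convexity of $v_\epsilon$ together with the vanishing of $v_\epsilon^\star$ as one approaches $\partial P$: this collapse of $v_\epsilon^\star$ can be converted, via the identity $v^\star(y)=\langle y,\nabla v(y)\rangle-v(y)$, into an $o(\mathrm{dist}(\cdot,\partial P))$ decay of $v_\epsilon(y)-\ell(y)$ along suitable supporting affine functions $\ell$ at boundary points, which pins down the modulus of continuity of $\nabla v_\epsilon$ up to $\partial P$. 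Once these estimates are in hand, one extracts a locally uniform subsequential limit of $u_\epsilon$, verifies the equation and the gradient-image condition in the limit, and transforms back to the $v$ side, yielding $v\in C^\infty(P)\cap C^{1,\alpha}(\overline P)$. The delicate step is the last one: the singular factor $(-v^\star)^{-k}$ in \eqref{eqn: main-equation} blows up at $\partial P$ and the corresponding free boundary on the $u$-side is not prescribed a priori, so uniform boundary regularity has to be extracted from intrinsic convexity-plus-gradient-image information rather than from classical boundary data.
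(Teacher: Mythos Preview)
Your strategy departs substantially from the paper's. The paper does not regularize or run an approximation scheme; instead it minimizes the explicit functional
\[
\mathcal E(v)=-\log I(v)+J(v)^{-\frac{1}{n+1}},\qquad I(v)=\tfrac{1}{k+1}\int_{\mathbb R^n}(-u)_+^{k+1},\quad J(v)=\tfrac{1}{n+1}\int_P v^{-(n+1)},
\]
over the normalized class $\mathcal C_{n+2}(P)=\{v>0\text{ convex}:\int_P y\,v^{-(n+2)}\,dy=0\}$. Compactness of a minimizing sequence is obtained from concrete two-sided bounds on $(-\inf u)^{k+1}|\Omega|$ and $(-\inf u)|\Omega|$ coming from inequalities relating $J$ to $|\Omega|$, $|\Omega^\circ|$ and $-\inf u$, together with the balancing of $\Omega$ about the origin forced by the normalization. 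The minimizer is then identified (Proposition~\ref{prop: naba u-solves-optimization-problem}) as the Brenier map for an explicit optimal transport problem between the measures $(-u)_+^k\,dx$ and $v^{-(n+2)}\,dy$, after which interior regularity follows from Caffarelli \cite{Caffarelli} and the $C^{1,\alpha}(\overline P)$ statement is read off directly from Jhaveri--Savin \cite[Theorem~1.1]{Jhaveri-Savin}. No limiting procedure or uniform free-boundary estimate is needed.

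Your proposal has two genuine gaps. First, the existence step for the $\epsilon$-regularized problem is not actually available: even after replacing $(-u)_+^k$ by $(\epsilon+(-u)_+)^k$, the factor $(u^\star)^{n+2}$ remains and the resulting global second boundary value problem is neither covered by Klartag's moment-measure theorem (which has no $u^\star$ dependence) nor by any standard continuity method you cite. Second, and more seriously, your boundary $C^{1,\alpha}$ argument does not work as written. The observation that $v^\star(y)=\langle y,\nabla v(y)\rangle-v(y)\to 0$ at $\partial P$ only says that the tangent plane to the graph at a boundary point passes through the origin; it gives no quantitative H\"older modulus for $\nabla v$. The actual $C^{1,\alpha}(\overline P)$ regularity is a hard analytic fact about optimal transport between a density vanishing polynomially ($(-u)_+^k$) and a bounded density ($v^{-(n+2)}$), which is precisely the content of \cite{Jhaveri-Savin}. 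In your scheme the regularized densities $(\epsilon+(-u_\epsilon)_+)^k$ do \emph{not} vanish on $\partial\Omega_\epsilon$, so Jhaveri--Savin does not apply uniformly in $\epsilon$, and you have offered no alternative mechanism to produce an $\epsilon$-independent boundary modulus. Absent such a mechanism, the limit $v$ is only known to lie in $C^\infty(P)$, and the $C^{1,\alpha}(\overline P)$ conclusion is unproved.
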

    \begin{remark}
	Notice that the right-hand side of the equation~\eqref{eqn: main-equation} is singular at the boundary, hence the solution cannot be $C^2$ up to the boundary, and $C^{1, \alpha}$ is the best one can hope for. 
    \end{remark}
        A corollary of this result is the solution of the following singular Monge-Amp\`ere equation, which can be thought of as the solutions of an optimal transport problem between non-compact domains. 
    \begin{corr}\label{cor: intro-solution-to-generalized-ansatz}
        The boundary-value problem 
        \begin{equation}\label{eq: genearlized-calabi-ansatz}
		\begin{cases}
			\left(\sum_{i=1}^{n+1}\frac{\partial \varphi}{\partial x_i}\right)^k\det D^2\varphi=1 \text{ in }(\mathbb R_+)^{n+1}\\
			\sum_{i=1}^{n+1}\frac{\partial \varphi}{\partial x_i}= 0 \text{ on }\partial (\mathbb R_+)^{n+1}. 
		\end{cases}
	\end{equation}
        admits a positive homogenous solution $\varphi\in C^{1, \alpha}(\overline{(\mathbb R_+)^{n+1}})\cap C^{\infty}((\mathbb R_+)^{n+1})$. 
    \end{corr}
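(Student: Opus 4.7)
The plan is to use homogeneity to reduce the boundary-value problem on the cone $(\mathbb R_+)^{n+1}$ to equation~\eqref{eqn: main-equation} on an $n$-dimensional open simplex, and then invoke Theorem~\ref{thm: main-theorem}. Matching degrees on the two sides of $(\sum_i \partial_i \varphi)^k\det D^2\varphi = 1$ forces any positively homogeneous solution to have degree
\[\beta = \frac{2(n+1)+k}{n+1+k} \in (1, 2].\]

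Let $s(x) = x_1 + \cdots + x_{n+1}$ and introduce centered slice coordinates $y' \in \mathbb R^n$ defined by $y'_i = x_i/s - 1/(n+1)$ for $i = 1,\ldots, n$. These identify the open slice $\{x \in \mathbb R_+^{n+1} : s = 1\}$ with an open simplex $P \subset \mathbb R^n$ containing the origin. I would make the ansatz
\[\varphi(x) = \psi(x)^\beta, \qquad \psi(x) := s(x) \cdot v(y'(x)),\]
for an unknown positive $v : P \to \mathbb R_+$. Then $\psi$ is positive and one-homogeneous, and $\varphi$ is positive and $\beta$-homogeneous on $(\mathbb R_+)^{n+1}$. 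A direct chain-rule computation, using $\sum_j y'_j v_j = v + v^\star$, gives the clean identity
\[\sum_{i=1}^{n+1} \partial_i \psi = -(n+1)\, v^\star(y'),\]
so $\sum_i \partial_i \varphi = -(n+1)\beta\,\psi^{\beta-1}\, v^\star$. Consequently the boundary condition $\sum_i \partial_i \varphi = 0$ on $\partial(\mathbb R_+)^{n+1}$ is equivalent to $v^\star = 0$ on $\partial P$ (since $\psi > 0$ away from the origin).

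For the Monge-Amp\`ere equation, I would expand $D^2 \varphi = \beta\psi^{\beta-1} D^2\psi + \beta(\beta-1)\psi^{\beta-2}\,\nabla\psi\otimes\nabla\psi$ and exploit the fact that the radial direction lies in the kernel of $D^2\psi$ (since $\psi$ is one-homogeneous). A block-matrix calculation then reduces the $(n+1)\times(n+1)$ determinant $\det D^2\varphi$ to an explicit positive multiple of the $n\times n$ transverse determinant $\det D^2 v$, times suitable powers of $\psi$, $v$, and $v^\star$. Combining with the formula for $\sum_i \partial_i \varphi$ above and imposing $(\sum_i \partial_i \varphi)^k \det D^2 \varphi = 1$ produces the reduced equation $\det D^2 v = v^{-(n+2)}(-v^\star)^{-k}$ on $P$ --- that is,~\eqref{eqn: main-equation} --- possibly after absorbing an overall positive constant by rescaling $v$. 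Theorem~\ref{thm: main-theorem} then furnishes $v \in C^{\infty}(P) \cap C^{1,\alpha}(\overline P)$, and $\varphi(x) := (s(x) v(y'(x)))^\beta$ lifts it to the desired positively $\beta$-homogeneous solution, with smoothness in the cone and $C^{1,\alpha}$ regularity up to the boundary (the regularity at the vertex $0$ being automatic since $\beta > 1$).

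The main obstacle is the Hessian determinant computation: one must carefully split off the radial kernel direction of $D^2\psi$ via a block formula and track the various factors through the nonlinear change of variables $x \mapsto (s, y')$, in order to recover precisely the exponents $-(n+2)$ and $-k$ appearing in~\eqref{eqn: main-equation}. By contrast, the boundary-condition side of the reduction is clean thanks to the identity $\sum_i \partial_i \psi = -(n+1) v^\star$, which already encodes the Legendre-type free boundary of~\eqref{eqn: main-equation} on $P$ as the physical boundary of the cone $(\mathbb R_+)^{n+1}$.
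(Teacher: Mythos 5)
Your proposal is correct and follows essentially the same route as the paper: the exponent $\beta = \tfrac{2(n+1)+k}{n+1+k}$ is exactly the homogenization degree $m = 1+\tfrac{n+1}{n+k+1}$ of Proposition~\ref{prop: phi-solves-gen-Calabi-ansatz}, your $\psi = s\cdot v(y')$ is the paper's cone coordinate $\lambda v(y)$ after the linear identification of $C(P)$ with the positive orthant, and the identity $\sum_i\partial_i\psi = -(n+1)v^\star$ is the same mechanism by which the free boundary condition $v^\star|_{\partial P}=0$ becomes the physical boundary condition on $\partial(\mathbb R_+)^{n+1}$. The block-determinant computation you defer is precisely the content of Proposition~\ref{prop: phi-solves-gen-Calabi-ansatz} and does close up, yielding a constant right-hand side that is normalized to $1$ by rescaling.
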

 
    The situation when $P$ is a simplex is particularly important as it is related to the study of complete Calabi-Yau manifolds \cite{Collins-Li} and so we pay particular attention to the regularity of the free boundary in this case. 
    The boundary of a simplex $\partial P$ has a stratification $\partial P_0\subset \partial P_1 \subset \cdots \subset \partial P_{n-1}$ where $\partial P_k$ is the $k$ skeleton consisting of the faces up to dimension $k$.  In this case, we have the following theorem which describes the leading sublinear behavior of $v$ at the top stratum of the boundary $\partial P_{n-1}\setminus \partial P_{n-2}$. 
	\begin{theorem}\label{thm: boundary-regularity}
		In the case when $P$ is a simplex, for any point $\bold x\in \partial P_{n-1}\setminus \partial P_{n-2}$, if we let $\vec n$ be the inward-pointing normal vector to $\partial P$ at $x$ and define the local coordinate system $\tilde x_n = \vec n\cdot (x-\bold x)$, and $(\tilde x_1, \ldots, \tilde x_{n-1})$ be orthogonal coordinates that's tangential to $\partial P$, then in this coordinate system, one has the following expansion for $x$ sufficiently close to $\bold x$
        \begin{equation}
            v(x) = v(\bold x)+\nabla v(\bold x)\cdot (x-\bold x)+p\tilde x_n^{1+\frac{1}{k+1}}+\sum_{i, j = 1}^{n-1}P_{ij}(\tilde x_i-\bold{\tilde x_i})(\tilde x_j-\bold{\tilde x_j})+O\left(\left(|x-\bold x|^2+\tilde x_n^{1+\frac{1}{k+1}}\right)^{1+\frac{\alpha}{2}}\right)
        \end{equation}
        for some $\alpha>0$, where $P_{ij} = D^2v(\frac{\partial }{\partial \tilde x_i}, \frac{\partial}{\partial \tilde x_j})$ is a positive definite matrix and $p>0$ is a positive constant. 
	\end{theorem}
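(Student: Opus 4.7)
Proof plan. The plan is to analyze $v$ near $\mathbf{x}$ by an anisotropic blow-up combined with a Liouville-type classification of the blow-up limit. Let $\beta := 1+\tfrac{1}{k+1} = \tfrac{k+2}{k+1}$ and introduce
\begin{equation*}
\bar v(y) := v(y) - v(\mathbf{x}) - \nabla v(\mathbf{x}) \cdot (y-\mathbf{x}),
\end{equation*}
so that $\bar v(\mathbf{x}) = 0$ and $\nabla \bar v(\mathbf{x}) = 0$. Since $v^\star(\mathbf{x})=0$, a direct computation yields the useful identity $v^\star(y) = y \cdot \nabla \bar v(y) - \bar v(y)$, which lets us recast the equation entirely in terms of $\bar v$. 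A formal ansatz $\bar v \approx p\,\tilde x_n^\beta + \tilde x'^T P \tilde x'$ and matching of the leading singular orders of $\det D^2 v$ with $(-v^\star)^{-k}$ in the normal direction picks out precisely the exponent $\beta$, and determines $p > 0$ in terms of $v(\mathbf{x})$, $c := \vec n \cdot \mathbf{x}$ (which is negative since $0 \in P$), and $\det P$.

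To make this rigorous, I introduce the anisotropic rescaling
\begin{equation*}
T_\lambda(\tilde x',\tilde x_n) := (\lambda \tilde x', \lambda^{2/\beta}\tilde x_n), \qquad v_\lambda(x) := \lambda^{-2}\,\bar v(\mathbf{x}+T_\lambda x).
\end{equation*}
Using the identity for $v^\star$ together with the diagonal form of $T_\lambda$, one verifies that
\begin{equation*}
\det D^2 v_\lambda(x) = v(\mathbf{x}+T_\lambda x)^{-(n+2)}\bigl(-c\,\partial_n v_\lambda(x) - R_\lambda(x)\bigr)^{-k},
\end{equation*}
where $R_\lambda = O(\lambda^{k/(k+2)})$ collects the subleading contributions, and, as a consequence of $v^\star \equiv 0$ on $\partial P$, one also has $c\,\partial_n v_\lambda + R_\lambda \equiv 0$ on the rescaled boundary $\{\tilde x_n=0\}$. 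The $C^{1,\alpha}$ bound of Theorem~\ref{thm: main-theorem} is not sufficient to control $v_\lambda$ directly under this anisotropic scaling; the necessary $C^0_{\rm loc}$ estimate is obtained by sandwiching $\bar v$ between explicit sub- and supersolutions of the form $p_{\pm}\tilde y_n^\beta + \tilde y'^T P_\pm \tilde y'$, perturbing the formal values, and applying the Monge-Amp\`ere comparison principle.

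Extracting a subsequential limit $v_0 := \lim v_\lambda$, the limit is a convex function on the half-space $H:=\{\tilde x_n > 0\}$ solving the model equation
\begin{equation*}
\det D^2 v_0 = v(\mathbf{x})^{-(n+2)}(-c\,\partial_n v_0)^{-k} \ \text{ in } H, \qquad \partial_n v_0|_{\{\tilde x_n=0\}} = 0,
\end{equation*}
with the growth prescribed by the barriers. The crux of the argument is a Liouville-type classification: by the tangential translation invariance of both the equation and the growth bound, combined with the strict convexity of $v_0$, the Hessian $D^2 v_0$ must be independent of $\tilde x'$. The equation then reduces to an ODE in $\tilde x_n$ whose unique solution of the prescribed form is $p\,\tilde x_n^\beta$. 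Consequently $v_0(x) = p\,\tilde x_n^\beta + \tilde x'^T P\tilde x'$ with $P$ positive-definite, and the blow-up limit is unique.

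Finally, a Campanato-type iteration --- reapplying the blow-up at successively smaller dyadic scales and using uniqueness of the limit to establish geometric decay of the deviation --- promotes the convergence $v_\lambda \to v_0$ into the quantitative expansion with remainder $O\bigl((|x-\mathbf{x}|^2 + \tilde x_n^{1+1/(k+1)})^{1+\alpha/2}\bigr)$, and identifies $P$ with a multiple of the tangential Hessian of $v$ at $\mathbf{x}$. The principal obstacle is the Liouville classification of the blow-up limit, where one must carefully separate the singular normal direction from the smooth tangential directions; the construction of sufficiently tight barriers enabling the anisotropic compactness estimate is a secondary technical difficulty.
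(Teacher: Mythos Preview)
Your blow-up/Liouville/Campanato strategy is the right general shape and mirrors closely the machinery of Jhaveri--Savin \cite{Jhaveri-Savin}.  The paper, however, takes a different and considerably shorter route: it first \emph{homogenizes} $v$ to a function $\varphi(\lambda y,\lambda)=(\lambda v(y))^{m}$ of degree $m=1+\tfrac{n+1}{n+k+1}$ on the cone $C(P)\subset\mathbb{R}^{n+1}$ (Proposition~\ref{prop: phi-solves-gen-Calabi-ansatz}).  A direct computation shows $\varphi$ satisfies $(\partial_{n+1}\varphi)^k\det D^2\varphi=\text{const}$ with $\partial_{n+1}\varphi=0$ on $\partial C(P)$.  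Near a top-dimensional face of the simplex this boundary is a hyperplane, and after a linear change of variables the equation is \emph{exactly} the degenerate optimal-transport equation treated in \cite{Jhaveri-Savin}.  The paper then simply cites their Section~4 a~priori estimates (tangential $C^{1,1}$ and H\"older continuity of $\varphi_{\tilde x_1}/\tilde x_1^{1/(k+1)}$), their Liouville theorem (Theorem~1.4), and their pointwise expansion (Proposition~5.1), and reads the expansion for $v$ off by restricting $\varphi$ to the slice $\{y_{n+1}=1\}$.  The homogenization thus replaces your barrier construction, your Liouville theorem, and your Campanato iteration all at once by three citations; the price is one extra variable and a page of Hessian computations.

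Your direct approach can in principle be carried out, but two steps as written are genuine gaps.  First, the Liouville argument: ``tangential translation invariance of the equation and growth bound, combined with strict convexity of $v_0$, forces $D^2v_0$ to be independent of $\tilde x'$'' is not a valid deduction.  Translation invariance of a PDE never by itself implies translation invariance of an individual solution, and your growth bound $c_1(|\tilde x'|^2+\tilde x_n^\beta)\le v_0\le c_2(|\tilde x'|^2+\tilde x_n^\beta)$ is \emph{not} translation invariant (it is centered at $0$); translating $v_0$ shifts the center and you cannot compare.  Jhaveri--Savin's Liouville theorem is proved instead by a compactness-and-contradiction argument after affine normalization, and you would have to reproduce that.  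Second, the barrier step: the model profiles $p_{\pm}\tilde x_n^\beta+\tilde x'^TP_{\pm}\tilde x'$ solve the \emph{limiting} equation, not the actual one (which still carries the varying factor $v(\mathbf{x}+T_\lambda x)^{-(n+2)}$ and the lower-order term $R_\lambda$), and the Monge--Amp\`ere comparison principle is delicate at the singular boundary $\{\tilde x_n=0\}$ where both sides of the equation blow up.  You would need to build genuine sub/super-solutions there and justify comparison up to the boundary.  The paper sidesteps both issues by the homogenization trick.
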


 The proof of Theorem~\ref{thm: main-theorem} is via a min-max argument, while Theorem~\ref{thm: boundary-regularity} is related to results of Jhaveri-Savin \cite{Jhaveri-Savin} on optimal transport with degenerate densities.  This paper is organized as follows: in Section~\ref{sec: geometricApplications} we discuss the geometric origins of equation~\ref{eqn: main-equation} and the connection with complete Calabi-Yau metrics.  We also pose a conjecture concerning a Liouville theorem for solutions to a certain optimal transport problem between non-compact domains. In Section~\ref{sec: MAFunctionals} we construct the functional whose critical points are formal solutions of~\eqref{eqn: main-equation}.  In Section~\ref{sec: proofOfMain} we prove Theorem~\ref{thm: main-theorem}.  In Section~\ref{sec: regularity} we discuss regularity issues and prove Theorem~\ref{thm: boundary-regularity}.  Furthermore, we establish the strict convexity of the free boundary in general. 

 \section{Geometric motivation}\label{sec: geometricApplications}
    In this section, we describe the relationship between the solution of the free-boundary problem \eqref{eqn: main-equation} and complete Calabi-Yau metrics. Let $X$ be a compact K\"ahler manifold of complex dimension $n+1+k$ and $D\in |-K_X|$ an ample anticanonical divisor. Assume that $D = D_1 +\ldots, D_{n+1}$ is simple normal crossing with $D_i$ ample. Then an important question is to ask whether $M = X\setminus D$ admits a complete Calabi-Yau metric.

    In \cite{Collins-Li}, it is realized that to construct such a model metric, one has to first construct the metric in the "generic region", which corresponds to a neighborhood of the top intersections $D_1\cap\cdots\cap D_{n+1}$.  The intersection $Z= D_1\cap \cdots \cap D_{n+1}$ is a compact, connected Calabi-Yau manifold by the adjunction formula and the Lefschetz hyperplane theorem.  Let us assume for simplicity that $D_i \in |d_iL_0|$ for some ample line bundle $L_0 \rightarrow X$ (see \cite{Collins-Li} for discussion on the general case).  A neighborhood of $Z \subset X$ is modeled on a neighborhood of the zero section in the bundle
    \[
    \bigoplus_{i=1}^{n+1} L_0^{d_i} \rightarrow Z.
    \]
    While such an identification is not holomorphic, the errors introduced by the identification are essentially negligible for the construction and we shall ignore them henceforth (see \cite{Collins-Li} for precise, quantitative discussion).
    By the third author's result \cite{Yau} we can find a metric $h$ on $L_0$, unique up to scaling, such that $-\ddbar \log h = \omega_{CY}$, the unique Calabi-Yau metric on $Z$ in the class $c_1(L_0)|_{Z}$.  Consider the real coordinates $x_i= -\frac{1}{d_i}\log|\sigma_i|_{h}$ where $\sigma_i$ is a local trivialization of $L_0^{d_i}$.  We now look for a function $\phi$ defined on $\{ x_i \gg 1 \forall i\}$ such that $\ddbar \phi$ defines a complete Calabi-Yau metric.  A straightforward calculation \cite{Collins-Li} shows $u$ must satisfy
    \[
    \quad D^{2}\phi >0, \quad \sum_{i=1}^{n+1} \frac{\del \phi}{\del x_i}>0
    \]
    and solve the real Monge-Amp\`ere equation
    \begin{equation}\label{eq: dimRedRMA}
    \det(D^2\phi)\left(\sum_{i=1}^{n+1} \frac{\del \phi}{\del x_i}\right)^k =c
    \end{equation}
    for some constant $c>0$, whose value may always be taken to be $1$ by rescaling $u$.  Note that for $n=0$, this equation reduces to the ODE whose solution is given by $\phi(x)= x^{\frac{k+2}{k+1}}$, which is precisely the asymptotics of the Tian-Yau metric \cite{Tian-Yau}.  It is natural to look for solutions of~\eqref{eq: dimRedRMA} which are homogeneous.  An easy calculation shows that the natural scaling yields
    \[
    \phi(x_1,\ldots,x_{n+1}) = \left(\sum_{i=1}^{n+1} x_i\right)^{1+\frac{n+1}{n+k+1}} v\left(\frac{x_1}{\sum_{i=1}^{n+1} x_i}, \ldots, \frac{x_{n+1}}{\sum_{i=1}^{n+1} x_i}\right)
    \]
    If we define variables $y_i = \frac{(n+1)x_i}{\sum_{i=1}^{n+1}x_i}-1$ for $1 \leq i \leq n$, then $v(y_1,\ldots, y_n)$ is defined on $P = \cap_{i}\{ -1 \leq y_i \} \cap \{\sum_{i=1}^n y_i \leq 1\}$, the standard simplex with barycenter at the origin. The calculation in Proposition~\ref{prop: phi-solves-gen-Calabi-ansatz} shows that~\eqref{eq: dimRedRMA} is equivalent to
    \begin{equation}\label{eq: dimRedHomMA}
    \det(D^2v) = \frac{c}{(v)^{n+2}(-v^{\star})^{k}}.
    \end{equation}
    which is precisely~\eqref{eqn: main-equation}.  The remaining question is how to impose boundary conditions on~\eqref{eq: dimRedRMA}, or equivalently~\eqref{eq: dimRedHomMA}.  This question was answered in the case $n+1=2$  in \cite{Collins-Li} by the first author and Li by exploiting an inductive structure in the problem.  The relevant boundary data, given in terms of $\phi$ or $v$ is  
    \begin{equation}\label{eq: geomBndData}
    \sum_{i=1}^{n+1}\frac{\del \phi}{\del x_i}\big|_{\del \mathbb{R}^{n+1}_{+}}=0, \text{ or, equivalently } v^{\star}\big|_{\del P} =0.
    \end{equation}
    We remark that~\eqref{eq: geomBndData} is the natural generalization of the boundary data introduced in \cite{Collins-Li} in the case $n+1=2$.  However, it is not at all clear whether this boundary data is appropriate for $n+1>2$.  The current work can be viewed as giving strong evidence that this choice of boundary data is correct.  To illustrate this, let us briefly discuss the inductive nature of the existence problem for Calabi-Yau metrics on $X\setminus D$.  We make the basic observation that if $J = (i_1, \ldots, i_\ell) \subset \{1,\ldots, n+1\}$ is an index set of length $\# J<n+1$, then
    \[
    X_{J} := \bigcap_{j\in J} D_{j}
    \]
    is a Fano variety, and $D^{J}:=\sum_{i \in J^c} (D_i\cap X_{J}) \in |-K_{X_{J}}|$ is an anti-canonical divisor with simple normal crossings.  In terms of the variables $(x_i,\ldots, x_{n+1})$, the region near $X_{J}$, but away from $X_{K}$ for $\# K > \# J$ corresponds to the region 
    \[
    \{\frac{x_{\ell}}{x_{j}} \ll 1 : \ell \in J^{c}, j \in J\} \cap \{\frac{x_{\ell}}{x_{j}} \sim O(1): \ell, j \in J\}
    \]
   Inductively, it is natural to expect that the Calabi-Yau metric on $X_{J}$ (assuming it exists) should determine the asymptotics of $\phi$ near $X_{J}$.  Note that in terms of the function $u$, the region near $X_{J}$ corresponds to the region near the interior of the dimension $\# J^c$ faces of $\del P$.  In the case that $\# J=n$, $X_{J}$ is Fano of dimension $k+1$ and $D^{J}$ consists of a single smooth, anti-canonical divisor for which a Calabi-Yau metric exists by \cite{Tian-Yau}.  In this case, the inductive structure predicts that near $ \{ x_i \ll 1\} \cap \{x_j \gg x_i \,\, \forall j\ne i\}$ we should have an expansion
   \[
   \phi = a(x') + b(x')x_i + c(x') x_i^{1+\frac{1}{k+1}} + \text{ higher order}.
   \]
    where $x' = (x_1,\ldots, \hat{x_i},\ldots, x_{n+1})$.  This is precisely the expansion obtained in Theorem~\ref{thm: boundary-regularity}.  In the case considered in \cite{Collins-Li}, ~\eqref{eq: dimRedHomMA} reduces to an ODE and an expansion of this sort can be obtained directly from power series techniques.  In the general case, it remains to address the regularity near the lower dimensional faces which seems to depend heavily on regularity theory for optimal transport with degenerate densities in polyhedral domains.  In turn, such a regularity theory seems closely related to establishing regularity of the free boundary in~\eqref{eqn: legendre-transform-equation}.  A key result would be to resolve the following conjectural Liouville theorem generalizing the results of \cite{Jhaveri-Savin}. 

    \begin{conj}[Liouville Theorem]\label{conj: Liouville}
        Let $\varphi:(\mathbb R_+)^l\times \mathbb R^{m}\to \mathbb R_{\geq 0}$ be a solution to the Monge-Ampere equation
        \begin{equation}
            \begin{cases}
            \left(\sum_{i = 1}^l\frac{\partial \varphi}{\partial x_i}\right)^k\det D^2\varphi = 1\text{ in }(\mathbb R_+)^l\times \mathbb R^{m}\\
            \sum_{i = 1}^l\frac{\partial \varphi}{\partial x_i} = 0 \text{ on }\partial (\mathbb R_+)^l\times \mathbb R^{m}.
            \end{cases}
        \end{equation}
        Then $\varphi$ is of the form
        \[\varphi(x) = c+\sum_{i = l+1}^{l+m}v_ix_i+\sum_{i,j = 1}^lP_{ij}x_ix_j+p \varphi_{l, k}(x_1, \ldots, x_l)\]
        where $p>0$ and $c, v_i$ are constants, $P_{ij}$ is a positive-definite $l\times l$ matrix and $\varphi_{l, k}$ is the homogenous solution to 
        \begin{equation}
            \begin{cases}
            \left(\sum_{i = 1}^l\frac{\partial \varphi_{l, k}}{\partial x_i}\right)^k\det D^2\varphi_{l, k} = 1\text{ in }(\mathbb R_+)^l\\
            \sum_{i = 1}^l\frac{\partial \varphi_{l, k}}{\partial x_i} = 0 \text{ on }\partial (\mathbb R_+)^l
            \end{cases}
        \end{equation}
        obtained from Corollary~\ref{cor: intro-solution-to-generalized-ansatz} with $n+1 = l$. 
    \end{conj}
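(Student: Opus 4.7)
The plan is to attack Conjecture~\ref{conj: Liouville} in two stages: first exploit the translation invariance of the equation in the $x_{l+1}, \ldots, x_{l+m}$ directions to reduce to the case $m=0$, and then establish a Liouville theorem for convex solutions on the orthant $(\mathbb R_+)^l$. This reflects the broad strategy of Jhaveri-Savin \cite{Jhaveri-Savin}, who proved an analogous statement when $l = 1$; the new difficulty here is the higher codimensional degeneracy of $\partial (\mathbb R_+)^l$.

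For the first stage, set $w = \partial_{x_j}\varphi$ for $j \in \{l+1, \ldots, l+m\}$. Differentiating $k\log(\sum_{i=1}^l \partial_i\varphi) + \log\det D^2\varphi = 0$ shows that $w$ satisfies the degenerate linear equation
\[
\varphi^{ab}\partial_{ab}w + \frac{k}{\sum_{s=1}^l \partial_s\varphi}\sum_{s=1}^l \partial_s w = 0
\]
on $(\mathbb R_+)^l \times \mathbb R^m$, together with a tangential Neumann-type condition $\sum_{s=1}^l \partial_s w = 0$ on $\partial(\mathbb R_+)^l \times \mathbb R^m$ obtained by differentiating the free-boundary condition. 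Because both the equation and its coefficients are translation-invariant in $x_{l+1}, \ldots, x_{l+m}$, combining a Bernstein-type argument with the convexity $\partial_j^2\varphi \geq 0$ should force $\partial_j^2 \varphi$ and the mixed derivatives $\partial_i\partial_j\varphi$ (for $i\leq l < j$) to be constants. Integrating then yields exactly the linear-in-$x_j$ and cross-quadratic structure from the conjecture, and leaves us with $\psi(x_1,\ldots, x_l)$ satisfying the Monge-Amp\`ere equation and free-boundary condition on $(\mathbb R_+)^l$.

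For the second stage, introduce the natural scaling $\psi_R(x) := R^{-(k+2l)/(k+l)}\psi(Rx)$, which preserves the equation by the homogeneity computation carried out in Section~\ref{sec: geometricApplications}. Using the $C^{1,\alpha}$ and interior $C^\infty$ estimates from Theorem~\ref{thm: main-theorem}, together with convexity, a subsequence of $\psi_R$ as $R \to \infty$ converges to a blow-down $\psi_\infty$ that is a globally homogeneous convex solution. The core rigidity claim is that any such homogeneous solution is a positive constant multiple of $\varphi_{l,k}$; this should follow from reducing to a problem on the standard simplex as in Section~\ref{sec: geometricApplications} and then invoking the strict convexity of the variational functional constructed in Section~\ref{sec: MAFunctionals}, yielding uniqueness of solutions to~\eqref{eqn: main-equation}. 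Having identified $\psi_\infty = p\,\varphi_{l,k}$, a refined Liouville theorem for the linearization of the equation at $\varphi_{l,k}$, classifying entire solutions of at-most-quadratic growth, should then produce the quadratic correction $\sum P_{ij} x_i x_j$ and close the argument.

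The hard part will be this second-stage refined Liouville theorem at the stratified boundary. The linearization of the Monge-Amp\`ere operator at $\varphi_{l,k}$ is degenerate elliptic, with degeneracy that worsens along the codimension-$\geq 2$ faces of $\partial(\mathbb R_+)^l$; the barrier, monotonicity, and boundary Harnack arguments used by Jhaveri-Savin for the half-space ($l=1$) case do not extend directly to the corner geometry. Resolving this will likely require constructing a weighted Schauder theory adapted to the orthant, with weights encoding the precise rates at which $\varphi_{l,k}$ degenerates along each stratum of $\partial(\mathbb R_+)^l$, together with carefully-tailored barriers modeled on $\varphi_{l,k}$ itself. Both the uniqueness of the homogeneous blow-down and the non-existence of non-quadratic entire solutions to the linearized equation appear to be the chief obstacles, and either one on its own is already a substantial problem.
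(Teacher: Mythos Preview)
The statement you are addressing is Conjecture~\ref{conj: Liouville}, and the paper does \emph{not} prove it; it is posed as an open problem (``A key result would be to resolve the following conjectural Liouville theorem\ldots'').  There is therefore no paper proof to compare your proposal against.  Your write-up is accordingly not a proof but a strategy outline, and you yourself flag the two central steps---uniqueness of the homogeneous blow-down and the Liouville theorem for the linearization at $\varphi_{l,k}$---as unresolved.  That assessment is accurate: those are precisely the points where the known techniques (in particular \cite{Jhaveri-Savin}) stop.

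Two cautions about the outline as written.  First, in Stage~2 you appeal to ``the strict convexity of the variational functional constructed in Section~\ref{sec: MAFunctionals}'' to conclude uniqueness of solutions to~\eqref{eqn: main-equation}.  The paper does not establish uniqueness: the functional $\mathcal E = -\log I + J^{-1/(n+1)}$ has a concave second term (Lemma~\ref{lem: concavity-of-J}) and the minimization is carried out over the nonlinear constraint set $\mathcal C_{n+2}(P)$, so convexity of $\mathcal E$ along any useful path is not available from what is written.  You would need a separate uniqueness argument, and this is itself nontrivial.  Second, in Stage~1 the ``Bernstein-type argument'' forcing $\partial_j^2\varphi$ and the mixed $\partial_i\partial_j\varphi$ to be constant is asserted but not sketched; on an unbounded domain with degenerate coefficients near $\partial(\mathbb R_+)^l$, a maximum-principle or integral Bernstein argument requires growth control on $\varphi$ that is not among the hypotheses of the conjecture as stated.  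Some a~priori growth bound (or an added hypothesis) is likely needed even for this ``easy'' reduction.
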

	A different, but closely related question is to understand the behavior of degenerating Calabi-Yau hypersurfaces in $\mathbb P^n$. In the situation when the hypersurfaces degenerate into two pieces, the results of \cite{SunZhang, HSVZ} give a complete description of the resulting behavior of the Calabi-Yau metrics. Recently, Y. Li has shown that when the hypersurfaces degenerate to $k$ pieces for $k<n$, the potentials of the Calabi-Yau metrics converge to a solution of a different optimal transport problem on a simplex with degenerate densities \cite{Li-intermediate}. Conjecture~\ref{conj: Liouville} also relates the solutions of \eqref{eq: genearlized-calabi-ansatz} to the boundary behavior of such kind of optimal transport problem. 
	
	\section{Monge-Amp\`ere functionals and their variations}\label{sec: MAFunctionals}
	Let $v:\overline P\to \mathbb R_+$ be a positive convex function and we denote by $u:\mathbb R^n\to \mathbb R$ its Legendre transform, 
	\[u(x) := \sup_{y\in \overline P}(\langle x, y\rangle-v(y)).\]
	Note that since $v$ is positive, we have
	\[u(0) = -\inf v<0, \]
	and therefore the set $\Omega =\{u<0\}$ is a convex set containing the origin. Let us denote the set of all $v$ by 
	\[\mathcal C^{+}(\overline P) = \{v:\overline P\to \mathbb R_+\big| v \text{ is convex }\}.\] 
	The Legendre dual of this class of functions is the class of convex functions 
	\[\mathcal C_{P}(\mathbb R^n) = \{u:\mathbb R^n\to \mathbb R\big| \phi_P-C \leq u < \phi_P \text{ for some } C>0\} \]
	and $\phi_P$ is the support function of $P$. 
	
	For any $v\in \mathcal C^+$, consider the functionals
	\[I(v) = \frac{1}{k+1}\int_{\mathbb R^n}(-u)_+^{k+1}(x)dx\]
	and 
	\[J(v) = \frac{1}{n+1}\int_P\frac{dy}{(v(y))^{n+1}}. \]
	Let $v_t\in \mathcal C^+$ be a smoothly varying family of functions, then the first variations of $I$ and $J$     are given by
	\[\frac{d}{dt}I(v_t) =  -\int_{\mathbb R^n}\dot u(-u)_+^k\]
	and
	\[\frac{d}{dt}J(v_t) = -\int_{P}\frac{\dot v}{v^{n+2}}. \]
	\begin{lem}\label{lem: concavity-of-J}
		The functional $J^{-\frac{1}{n+1}}(v)$ is concave function on $\mathcal C^+(\overline P)$. 
	\end{lem}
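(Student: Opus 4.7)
The plan is to establish concavity by computing the second derivative of $F(t) := J(v_t)^{-1/(n+1)}$ along the linear path $v_t = (1-t)v_0 + tv_1$ between two elements $v_0, v_1 \in \mathcal{C}^+(\overline P)$, and to reduce the resulting differential inequality to Cauchy--Schwarz. The class $\mathcal{C}^+(\overline P)$ is convex under linear combinations, so $v_t$ is again positive and convex. If $J(v_0)$ or $J(v_1)$ is infinite, the pointwise bounds $v_t \geq (1-t)v_0$ and $v_t \geq tv_1$ give $J(v_t) \leq \min\bigl((1-t)^{-(n+1)}J(v_0),\, t^{-(n+1)}J(v_1)\bigr)$, so concavity is trivial in those cases; I may therefore assume $J(v_0), J(v_1) < \infty$, in which case the same dominated bounds legitimize differentiation under the integral sign on $(0,1)$.

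Logarithmic differentiation of $F = J^{-1/(n+1)}$ yields
\[F''(t) \;=\; \frac{F(t)}{(n+1)\,J(t)}\left[\frac{n+2}{n+1}\,\frac{J'(t)^2}{J(t)} - J''(t)\right],\]
so $F'' \leq 0$ is equivalent to the pointwise inequality $(n+1)\,J(t)\,J''(t) \geq (n+2)\,J'(t)^2$. Since $\dot v_t = v_1 - v_0$ is constant in $t$ and $\ddot v_t \equiv 0$, differentiation under the integral sign gives the formulas
\[J'(t) \;=\; -\int_P v_t^{-(n+2)}\,\dot v_t\,dy, \qquad J''(t) \;=\; (n+2)\int_P v_t^{-(n+3)}\,\dot v_t^{\,2}\,dy.\]

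Substituting these expressions and cancelling the common factor $(n+2)$, the target inequality reduces to
\[\int_P v_t^{-(n+1)}\,dy \;\cdot\; \int_P v_t^{-(n+3)}\,\dot v_t^{\,2}\,dy \;\geq\; \left(\int_P v_t^{-(n+2)}\,\dot v_t\,dy\right)^{2},\]
which is precisely the Cauchy--Schwarz inequality in $L^2(P)$ applied to the functions $g = v_t^{-(n+1)/2}$ and $h = v_t^{-(n+3)/2}\,\dot v_t$, noting that $gh = v_t^{-(n+2)}\,\dot v_t$. I do not anticipate a serious obstacle: the entire argument is the standard reduction of concavity of a negative power of an integral functional to a second-moment Cauchy--Schwarz estimate, and the only mildly technical point—justifying differentiation under the integral when $P$ is possibly unbounded—is handled by the explicit dominated bounds noted above.
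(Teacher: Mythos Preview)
Your proof is correct and follows essentially the same route as the paper: compute $J'$ and $J''$ along a linear path, reduce concavity of $J^{-1/(n+1)}$ to the inequality $(n+1)J\,J'' \ge (n+2)(J')^2$, and verify that inequality by Cauchy--Schwarz with the factorization $v_t^{-(n+2)}\dot v_t = v_t^{-(n+1)/2}\cdot v_t^{-(n+3)/2}\dot v_t$. You are somewhat more careful than the paper about the edge cases $J=\infty$ and about justifying differentiation under the integral, but otherwise the arguments are identical.
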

	\begin{proof}
		Let $v_t = v_0+tv'$ be a linear path of functions in $\mathcal C^+(\overline P)$. Then we can compute
		\begin{equation}
			\frac{d}{dt}J(v_t) = -\int_{P}\frac{v'}{v_t^{n+2}}
		\end{equation}
		and 
		\begin{equation}
			\frac{d^2}{dt^2}J(v_t) = (n+2)\int_{P}\frac{(v')^2}{v_t^{n+3}}. 
		\end{equation}
		It follows from the Cauchy-Schwartz inequality that
		\[\left(\frac{d}{dt}J(v_t)\right)^2\leq \frac{n+1}{n+2}J(v_t)\left(\frac{d^2}{dt^2}J(v_t)\right), \]
		which implies that $J^{-\frac{1}{n+1}}$ is concave. 
	\end{proof}

	\begin{lem}\label{lem: J-lower-bound}
		For any convex $v:\overline P\to \mathbb R_+$, we have
		\[(n+1)J(v)\geq \frac{|\{u<0\}^{\circ}|}{(-\inf u)}. \]
	\end{lem}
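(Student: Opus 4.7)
The plan is to realize the polar $\Omega^\circ$ (with $\Omega := \{u<0\}$) as the image of a distinguished subdomain of $P$ under the explicit change of variables $T(y) := y/v(y)$, and then bound the resulting Jacobian pointwise using the convexity of $v$.

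First I record two elementary facts. Since $u(x)\geq \langle x,0\rangle - v(0)=-v(0)$ while $u(p)=-v(0)$ for any subgradient $p\in\partial v(0)$, one has $-\inf u = v(0)=:M$. The subgradient inequality $v(0)\geq v(y)+\nabla v(y)\cdot(0-y)=-v^\star(y)$ then yields the pointwise bound $-v^\star(y)\leq M$ on $P$. For any $x$ with $u(x)<0$, $\langle x,y\rangle<v(y)$ and hence $\langle x,T(y)\rangle<1$, so $T(\overline P)\subseteq \overline{\Omega^\circ}$. Applying the matrix determinant lemma to $DT=v^{-1}I - v^{-2}y(\nabla v)^T$ yields
\[
\det DT(y) = -\frac{v^\star(y)}{v(y)^{n+1}}.
\]

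The key claim is that $T$ restricts to a bijection from $A := \{y\in P : v^\star(y)\leq 0\}$ onto $\Omega^\circ$. This rests on the duality
\[
\phi_{\overline\Omega}(z) \;=\; \inf\bigl\{\,r^{-1}v(rz)\,:\,r>0,\ rz\in \overline P\,\bigr\},
\]
where $\phi_{\overline\Omega}$ is the support function of $\overline\Omega$. The inequality $\phi_{\overline\Omega}(z)\leq r^{-1}v(rz)$ is immediate from $\langle x,rz\rangle \leq v(rz)$ for $x\in\overline\Omega$; for the reverse, letting $r^\star$ minimize $v(rz)/r$ (so $v^\star(r^\star z)=0$ in the interior case), the point $x^\star:=\nabla v(r^\star z)$ satisfies $\langle x^\star,y\rangle\leq v(y)$ for all $y$ by convexity and pairs with $z$ to give exactly $v(r^\star z)/r^\star$.

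Granting the duality, for $z\in \Omega^\circ$ (equivalently $\phi_{\overline\Omega}(z)\leq 1$) the convex function $F(r):=v(rz)-r$ satisfies $F(0)=M>0$ and $F(r^\star)=r^\star(\phi_{\overline\Omega}(z)-1)\leq 0$; the intermediate value theorem yields $r_0\in(0,r^\star]$ with $F(r_0)=0$, and since $F'(r_0)\leq 0$ (smaller root of convex $F$), the identity $v^\star(r_0 z)=r_0F'(r_0)$ places $y:=r_0 z$ in $A$ with $T(y)=z$. Injectivity follows because two distinct zeros $r_1<r_2$ in $A$ would force $F'(r_2)=0$ by convexity and then $F\equiv 0$ on $[r_1,r_2]$, a measure-zero degeneracy. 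The change-of-variables formula then gives
\[
|\Omega^\circ| = \int_A |\det DT|\,dy = \int_A \frac{-v^\star(y)}{v(y)^{n+1}}\,dy \;\leq\; M\int_P \frac{dy}{v(y)^{n+1}} = (n+1)M\cdot J(v),
\]
which rearranges to the asserted inequality. The main obstacle I anticipate is the rigorous verification of the duality formula (particularly when the minimum is attained on $\partial P$) and the handling of non-smooth or degenerate $v$, both of which should be tractable by standard approximation.
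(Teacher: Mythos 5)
Your argument is correct in outline but follows a genuinely different route from the paper. The paper simply quotes the identity $\int_{\{u<0\}}\frac{-u\,\det D^2u}{(u^{\star})^{n+1}}\,dx=|\{u<0\}^{\circ}|$ from \cite[Theorem 2.1]{Tong-Yau}, bounds $-u\leq -\inf u$, and changes variables by $\nabla u$ to land on $\int_P v^{-(n+1)}$; smoothness is then removed by mollifying $u$ plus adding $\eps\phi$. Your radial map $T(y)=y/v(y)$ is, after the substitution $y=\nabla u(x)$ (under which $v(y)=u^{\star}(x)$, $v^{\star}(y)=u(x)$, $dy=\det D^2u\,dx$), exactly a self-contained re-derivation of that Tong--Yau identity, so your proof is more elementary in that it avoids the citation entirely; the price is that you must establish the covering property $T(A)\supseteq\Omega^{\circ}$ yourself. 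Note also that injectivity of $T|_A$ is not needed: the area formula gives $\int_A|\det DT|\geq|T(A)|$ regardless, and only surjectivity (a.e.) enters the inequality.

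The one genuine soft spot is the reverse inequality in your duality formula when the infimum of $r\mapsto v(rz)/r$ is attained at the endpoint $r_{\max}$ with $rz\in\del P$. There the first-order condition $v^{\star}(r^{\star}z)=0$ fails (one only gets $v^{\star}(r_{\max}z)\leq 0$), so $x^{\star}=\nabla v(r_{\max}z)$ pairs with $z$ to give a value strictly \emph{below} $v(r_{\max}z)/r_{\max}$, and your sketched argument does not close; the correct maximizer uses a subgradient of $v+\chi_{\overline P}$, i.e.\ $\nabla v(r_{\max}z)$ plus an element of the normal cone to $P$. A clean fix: observe that $h(z):=\inf\{s\,v(z/s):s>0,\ z/s\in\overline P\}$ is the infimal projection over $s$ of the perspective function of $v$, hence convex, positively $1$-homogeneous, and finite (so continuous) on $\R^n$; since $\{x:\langle x,z\rangle\leq h(z)\ \forall z\}=\{u\leq 0\}$ by definition of $u$, it follows that $h=\phi_{\overline\Omega}$. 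With that in hand your intermediate-value argument produces, for each $z\in\Omega^{\circ}$, a first root $r_0$ of $F(r)=v(rz)-r$ with $F'(r_0)\leq 0$, hence $v^{\star}(r_0z)=r_0F'(r_0)\leq 0$ and $T(r_0z)=z$, as you claim. The remaining items ($-\inf u=v(0)$, the bound $-v^{\star}\leq v(0)$, the Jacobian computation, and the approximation of non-smooth $v$) are all fine.
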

	\begin{proof}
		If we assume $u$ is smooth and strictly convex, then by \cite[Theorem 2.1]{Tong-Yau}, we have
		\[\int_{\{u<0\}}\frac{-u\det D^2u}{(u^{\star})^{n+1}} = |\{u<0\}^{\circ}|. \]
		From this, it follows by a change of variables formula that
		\[\frac{|\{u<0\}^{\circ}|}{(-\inf u)}\leq \int_{\{u<0\}}\frac{\det D^2u}{(u^{\star})^{n+1}} = \int_{\nabla u(\{u<0\})}\frac{1}{v^{n+1}}\leq \int_{P}\frac{1}{v^{n+1}}. \]
		For a general convex $u$ which is not necessarily smooth and strictly convex, we can consider the following functions as a smooth approximation for $u$,  
		\[u_{\eps}(x) := u\star\rho_{\eps}(x)+\eps\phi(x)\]
		where $\rho_{\eps}$ is the standard mollifier with support $B_{\eps}(0)$, and $\phi(x)>0$ is a smooth, strictly convex function with $\nabla \phi(\mathbb R^n) = B_{1}(0)$. Then $u_{\eps}:\mathbb R^n\to \mathbb R$ is smooth and strictly convex function with $\nabla u_{\eps}(\mathbb R^n) = P_{\eps}$ where $P_{\eps} = P+B_{\eps}$ and $+$ is the Minkowski sum. If we let $v_{\eps}(y) = \sup_{x\in\mathbb R^n}(\langle x, y\rangle-u_{\eps}(x))$  be the Legendre transform of $u_{\eps}$, then $v$ is smooth and strictly convex and we have
		\[\int_{P_{\eps}}\frac{1}{v_{\eps}^{n+1}}\geq \frac{|\{u_{\eps}<0\}^{\circ}|}{(-\inf u_{\eps})}. \]
		We would like to take the limit as $\eps\to 0$. As $\eps\to 0$, the right-hand side converges to $\frac{|\{u<0\}^{\circ}|}{(-\inf u)}$. For the left-hand side, we note that since $u_{\eps}(0)\to u(0)$, $v_{\eps}$ is uniformly bounded from below and hence $\frac{1}{v_{\eps}^{n+1}}$ is uniformly bounded from above, we can also easily check that $\frac{1}{v_{\eps}^{n+1}(y)}$ converges to $\frac{1}{v^{n+1}(y)}$ pointwise for all $y$, and hence by the dominated convergence theorem, we obtain that 
		\[\lim_{\eps\to 0}\int_{P_{\eps}}\frac{1}{v_{\eps}^{n+1}} = \int_P\frac{1}{v^{n+1}}\]
		and we are done.  
	\end{proof}

	\begin{prop}\label{prop: min-over-shifts}
		The function $f(x) = J(v-\langle x, \cdot \rangle)$ is strictly convex function defined on a convex set $\Omega = \{u<0\}$ that goes to $\infty$ at $\partial \Omega$, and moreover, $f(x)$ attains its minimum at a unique point $x_0 \in \Omega$ at which 
		\begin{equation}\label{eq: eqn-for-min}
			\int_P\frac{y}{(v(y)-\langle x_0, y\rangle)^{n+2}}dy = 0.
		\end{equation}
	\end{prop}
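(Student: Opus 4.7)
The plan is to verify in turn the four asserted properties of $f(x) = J(v - \langle x, \cdot\rangle)$: smoothness on $\Omega$, strict convexity, blow-up at $\partial\Omega$, and the Euler--Lagrange equation at the minimizer.

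First I would check that $f$ is well-defined and smooth on $\Omega = \{u<0\}$. For $x\in\Omega$ the condition $u(x)<0$ is equivalent to $v(y)-\langle x,y\rangle>0$ for every $y\in\overline P$; continuity on the compact set $\overline P$ then gives a positive uniform lower bound, so differentiation under the integral is justified and yields
\[
\nabla f(x) = \int_P \frac{y\,dy}{(v(y) - \langle x, y\rangle)^{n+2}}, \qquad D^2 f(x) = (n+2)\int_P \frac{y\otimes y\,dy}{(v(y) - \langle x, y\rangle)^{n+3}}.
\]
For any nonzero $\xi \in \mathbb R^n$, $\xi^\top D^2 f(x)\,\xi$ is strictly positive because $\{y\in P : \langle y,\xi\rangle \ne 0\}$ has positive Lebesgue measure in the nonempty open set $P$, so $f$ is strictly convex on $\Omega$. (Equivalently, one can deduce strict convexity along linear paths from the strict form of Lemma~\ref{lem: concavity-of-J}, noting that the variation $-\langle x_1-x_0,y\rangle$ is not proportional to $v_t(y)$ when $x_1\ne x_0$.)

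Next I would show that $\Omega$ is bounded and that $f\to\infty$ at $\partial\Omega$. Since $0$ lies in the interior of $P$, $u(x)\geq c|x|-C$ for some constants, so $\Omega$ is bounded. For the boundary blow-up, I apply Lemma~\ref{lem: J-lower-bound} to $v_x := v-\langle x,\cdot\rangle\in\mathcal C^+(\overline P)$, whose Legendre transform is the translate $u_x(z)=u(z+x)$. Since $\{u_x<0\}=\Omega-x$ and $\inf u_x=\inf u$ is a fixed negative constant, the lemma yields
\[
(n+1)\,f(x) \;=\; (n+1)\,J(v_x) \;\geq\; \frac{|(\Omega - x)^{\circ}|}{-\inf u}.
\]
As $x\to x_\infty\in\partial\Omega$, the origin moves onto the boundary of the bounded convex body $\Omega-x$; the polar of such a body has infinite Lebesgue measure, and a quantitative estimate (via a supporting hyperplane at the boundary point of $\Omega-x$ nearest the origin, which lets one exhibit a half-cylinder inside the polar whose length grows like $1/\mathrm{dist}(x,\partial\Omega)$) shows $|(\Omega-x)^{\circ}|\to\infty$. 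Hence $f(x)\to\infty$ at $\partial\Omega$.

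Combining smoothness, strict convexity, and properness on the bounded open convex set $\Omega$, $f$ attains its infimum at a unique interior critical point $x_0\in\Omega$, and $\nabla f(x_0)=0$ is exactly the asserted equation~\eqref{eq: eqn-for-min}. The main obstacle is the boundary blow-up, which is not visible from the integral defining $f$ alone; it relies essentially on Lemma~\ref{lem: J-lower-bound} together with the convex-geometric fact that polars of convex bodies with the origin on their boundary have infinite volume. The other steps amount to differentiation under the integral sign and a standard convex-analytic compactness argument.
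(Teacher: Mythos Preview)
Your proof is correct and follows essentially the same route as the paper: well-definedness on $\Omega$ via $v_x \geq -u(x) > 0$, strict convexity by direct differentiation under the integral sign, blow-up at $\partial\Omega$ via Lemma~\ref{lem: J-lower-bound}, and the critical-point equation $\nabla f(x_0)=0$. You are in fact somewhat more explicit than the paper, which simply asserts that Lemma~\ref{lem: J-lower-bound} forces $f\to\infty$ at $\partial\Omega$ without spelling out the polar-volume argument, and which checks convexity only along lines rather than computing the full Hessian.
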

	\begin{proof}
		For any $x_0\in \mathbb R^n$, let us denote $v_{x_0}(y) := v(y)-\langle x_0, y\rangle$. The Legendre transform of $v_{x_0}$ is given by $u_{x_0}(x) = u(x+x_0)$. For $x_0\in \Omega$, we have $u_{x_0}(0)<0$, therefore $v_{x_0}(y)\geq -u_{x_0}(0)>0$ hence $f(x)$ is well-defined and finite for $x\in\Omega$. We also have $\{u_{x_0}<0\} = \Omega-x_0$, hence by Lemma~\ref{lem: J-lower-bound}, $f(x)$ must go to $\infty$ as $x$ approach $\partial\Omega$. To see that $f(x)$ is convex, let $v_t(y) = v(y)-t\langle x_0, y \rangle$, and let $f(t) := \frac{1}{n+1}\int_P\frac{1}{v_t^{n+1}}$, then
		\begin{equation}
			\frac{d^2 f(t)}{dt^2} = (n+2)(n+1)\int_{P}\frac{\langle x_0, y\rangle^2}{v_t(y)^{n+3}}\,dy>0
		\end{equation}
		hence $f(t)$ is strictly convex. Since $f(x)$ is a strictly convex twice differentiable function defined on a convex set $\Omega$ and it goes to $\infty$ at the boundary, therefore it must have a unique critical point at which it achieves its minimum in $\Omega$. Equation~\eqref{eq: eqn-for-min} follows from taking the derivative of $f$ at a critical point. 
	\end{proof}
	Let us define the space 
	\[\mathcal C_{n+2}(P) := \{v:\overline P\to \mathbb R_+\big| \int_P\frac{y}{v^{n+2}}dy = 0\}\]

	\begin{prop}\label{prop: J-upper-bound}
		For any convex $v:\overline P \to \mathbb R_+$, we have
		\[J(v)\leq \frac{n+2}{n+1}\frac{|\Omega^{\circ}|}{|u(0)|}. \]
	\end{prop}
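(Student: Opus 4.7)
The plan is to prove the bound by a direct integration by parts on $P$, combined with the pointwise lower bound $v \geq |u(0)|$ (a consequence of convexity of $u$) and the identity $|\Omega^\circ| = \int_P (-v^\star)_+/v^{n+1}\,dy$, which is just the Tong-Yau identity underlying Lemma~\ref{lem: J-lower-bound}, transported to $P$ via the change of variables $y=\nabla u(x)$. As in the proof of Lemma~\ref{lem: J-lower-bound}, the integration by parts is justified only when $v$ is smooth and strictly convex, so I would first reduce to this case by the smoothing $u_\eps = u\ast\rho_\eps + \eps\phi$ already introduced there; the general case then follows by dominated convergence.

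The heart of the argument is the identity
\[
\int_P v^{-(n+1)}\,dy + \int_{\partial P}(y\cdot\nu)\,v^{-(n+1)}\,dS = (n+1)\int_P \frac{-v^\star}{v^{n+2}}\,dy,
\]
obtained by applying the divergence theorem to $y\mapsto y\,v(y)^{-(n+1)}$ and using $\nabla\cdot y = n$, $\nabla(v^{-(n+1)}) = -(n+1)v^{-(n+2)}\nabla v$, and $y\cdot\nabla v = v + v^\star$. Since $0\in P$ and $P$ is convex, $y\cdot\nu\geq 0$ on $\partial P$, so the boundary term is nonnegative and can be discarded, yielding $\int_P v^{-(n+1)}\,dy \leq (n+1)\int_P (-v^\star)/v^{n+2}\,dy$.

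For the right-hand side, the convexity inequality for $u$ at the origin reads $u(0)\geq u(x) + \langle -x,\nabla u(x)\rangle$, equivalently $u^\star(x)\geq -u(0) = |u(0)|$, which in the $y$ variable says $v(y)\geq |u(0)|$. Hence
\[
\int_P \frac{-v^\star}{v^{n+2}}\,dy \leq \int_P \frac{(-v^\star)_+}{v^{n+2}}\,dy \leq \frac{1}{|u(0)|}\int_P \frac{(-v^\star)_+}{v^{n+1}}\,dy = \frac{|\Omega^\circ|}{|u(0)|},
\]
where the last equality is the Tong-Yau identity $|\Omega^\circ| = \int_\Omega -u\det D^2u/(u^\star)^{n+1}\,dx$ pushed forward by $y=\nabla u(x)$ (under which $u\mapsto v^\star$, $u^\star\mapsto v$). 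Combining yields $\int_P v^{-(n+1)}\,dy \leq (n+1)|\Omega^\circ|/|u(0)|$, which immediately gives the claimed inequality (in fact with the slightly stronger constant $1$ in place of $\frac{n+2}{n+1}$).

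The hard part is nothing conceptual --- it is purely the bookkeeping of the smoothing argument, which is formally identical to that in Lemma~\ref{lem: J-lower-bound}. In particular, one needs that the boundary integral $\int_{\partial P}(y\cdot\nu)\,v_\eps^{-(n+1)}\,dS$ is uniformly bounded (which holds because $v_\eps\geq \inf v_\eps \to |u(0)|>0$ so $v_\eps$ is uniformly bounded below up to $\partial P_\eps$), and this term is discarded with a favorable sign, so no delicate boundary convergence is needed.
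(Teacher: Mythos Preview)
Your argument is correct and yields the sharper bound $J(v)\leq |\Omega^\circ|/|u(0)|$. The divergence identity, the sign of the boundary term, the lower bound $v\geq |u(0)|$, and the change-of-variables interpretation of the Tong--Yau identity are all valid, and the smoothing step carries over verbatim from Lemma~\ref{lem: J-lower-bound} since the boundary term is discarded with a favorable sign.

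That said, your route is genuinely different from the paper's. The paper does not integrate by parts at all: it observes the pointwise bound $v(y)\geq \max\{\phi_\Omega(y),\,|u(0)|\}$ (where $\phi_\Omega$ is the support function of $\Omega$), extends the integral from $P$ to all of $\mathbb R^n$, and computes $\int_{\mathbb R^n}\max\{\phi_\Omega,|u(0)|\}^{-(n+1)}$ explicitly by a layer-cake argument. This is more elementary---no smoothing, no divergence theorem, no appeal to the Tong--Yau identity---and works directly for any convex $v$. Your approach, on the other hand, is appealing because it mirrors the proof of the lower bound in Lemma~\ref{lem: J-lower-bound}: both inequalities ultimately rest on the same identity $|\Omega^\circ|=\int_P(-v^\star)_+/v^{n+1}$, making the pair look like a natural two-sided estimate. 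It is also worth noting that the paper's layer-cake computation, if carried out without the loose overestimate in the penultimate line, also yields the constant $1$ rather than $(n+2)/(n+1)$; so the improved constant you obtain is not a special feature of your method.
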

	\begin{proof}
		By the definition of $\Omega$ and convexity, we have
		\[v(x)\geq \sup \{\phi_{\Omega}(x), |u(0)|\} \]
		where $\phi_{\Omega}(x) = \sup_{y\in\Omega}\langle x, y\rangle$ is the support function of $\Omega$. Then we have
		\begin{align}
			J(u) &= \frac{1}{n+1}\int_{P}v(x)^{-n-1}\,dx\\
			&\leq \frac{1}{n+1}\int_{\mathbb R^n}\sup \{\phi_{\Omega}(x), |u(0)|\}^{-n-1} \,dx\\
			& = \frac{1}{n+1}\int_{\mathbb R^n\setminus |u(0)|\Omega^{\circ}}\phi_{\Omega}(x)^{-n-1}\,dx+\frac{1}{n+1}\int_{|u(0)|\Omega^{\circ}}|u(0)|^{-n-1} \,dx\\
			&\leq \frac{|\Omega^{\circ}|}{n+1}\int_0^{|u(0)|^{-n-1}}t^{\frac{-n}{n+1}}dt+\frac{1}{n+1}\frac{|\Omega^\circ|}{|u(0)|}\\
			& = \frac{n+2}{n+1}\frac{|\Omega^{\circ}|}{|u(0)|}
		\end{align}
	\end{proof}
	
	\begin{corr}\label{cor: balanced-Omega}
		If $v\in \mathcal C_{n+2}(P)$, then
		\[J(v)\leq \frac{C(n)}{|\Omega|(-\inf u)}, \]
		where $C(n)$ is some constant depending only on dimension. Moreover $\Omega = \{u<0\}$ is balanced about the origin. 
	\end{corr}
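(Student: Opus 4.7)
The plan is to combine Proposition~\ref{prop: J-upper-bound} with a shifting argument driven by the balanced hypothesis, together with the Blaschke--Santal\'o inequality applied to $\Omega$.

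First, I would reinterpret the condition $v \in \mathcal{C}_{n+2}(P)$ using Proposition~\ref{prop: min-over-shifts}: the vanishing-moment condition $\int_P y/v^{n+2}\,dy = 0$ is precisely the first-order optimality condition at $x_0 = 0$ for the strictly convex function $f(x_0) := J(v - \langle x_0, \cdot\rangle)$ on $\Omega$, so $J(v) = f(0) = \min_{x_0 \in \Omega} f(x_0)$. For any $x_0 \in \Omega$ the Legendre transform of $v_{x_0}(y) := v(y) - \langle x_0, y\rangle$ is $u(\,\cdot\, + x_0)$, whose zero sublevel set is $\Omega - x_0$. Applying Proposition~\ref{prop: J-upper-bound} to $v_{x_0}$ therefore yields
\[
J(v) \leq f(x_0) \leq \frac{n+2}{n+1}\cdot\frac{|(\Omega - x_0)^\circ|}{|u(x_0)|}, \qquad \forall\, x_0 \in \Omega.
\]

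Next, I would optimize this family of estimates by choosing $x_0 = s(\Omega)$, the Santal\'o point of $\Omega$. The Blaschke--Santal\'o inequality gives $|\Omega|\cdot|(\Omega - s)^\circ| \leq \omega_n^2$. For the denominator, I would rely on a convex-geometric depth estimate: the Santal\'o point sits at dimension-controlled relative depth inside $\Omega$, and combined with the convexity of $u$ (which vanishes on $\partial\Omega$) this yields $|u(s)| \geq c(n)(-\inf u)$ for a dimensional constant $c(n) > 0$. Substituting produces the desired bound
\[
J(v) \leq \frac{C(n)}{|\Omega|(-\inf u)}.
\]

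Finally, the balanced assertion follows by contrasting this upper bound with the lower bound from Lemma~\ref{lem: J-lower-bound}: together they force $|\Omega|\cdot|\Omega^\circ| \leq C'(n)$, a Blaschke--Santal\'o-type inequality relative to the origin that is the quantitative meaning of ``$\Omega$ is balanced about $0$.'' The main obstacle is the depth estimate $|u(s)| \geq c(n)(-\inf u)$, which requires a quantitative comparison between the Santal\'o point of $\Omega$ and the minimum point of $u$; should this comparison prove awkward, one may instead take $x_0$ to be the minimum point of $u$ and bound $|(\Omega - x_0)^\circ|$ via a generalized Blaschke--Santal\'o inequality valid at any interior center, at the cost of worse dimensional constants.
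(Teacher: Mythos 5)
Your argument is correct and follows essentially the same route as the paper: the paper also exploits Proposition~\ref{prop: min-over-shifts} to shift to a well-centered point $x_0$ (it takes the center of the John ellipsoid of $\Omega$ rather than the Santal\'o point), applies Proposition~\ref{prop: J-upper-bound} to $v-\langle x_0,\cdot\rangle$, bounds $|(\Omega-x_0)^\circ|$ by $C(n)/|\Omega|$, and combines with Lemma~\ref{lem: J-lower-bound} to get $|\Omega||\Omega^\circ|\leq C(n)$. Your ``depth'' estimate $|u(x_0)|\geq c(n)(-\inf u)$ is fine for either choice of center, since both the John center and the Santal\'o point balance $\Omega$ with a dimensional constant and $u$ is convex with $u=0$ on $\partial\Omega$. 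One caveat: the fallback you propose (centering at the minimum point of $u$) would not work, because $|(\Omega-x_0)^\circ|$ blows up as $x_0\to\partial\Omega$ and there is no Blaschke--Santal\'o-type bound at an arbitrary interior center; stick with a balanced center.
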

	
	\begin{proof}
		By Lemma~\ref{lem: J-lower-bound}, we  have
		\[(n+1)J(v)\geq \frac{|\Omega^{\circ}|}{(-\inf u)}. \]
		On the other hand by John's Lemma, there exist ellipsoids $E$ centered at some point $x_0\in\Omega$ such that $E\subset \Omega\subset nE$. By Proposition~\ref{prop: min-over-shifts} and Proposition~\ref{prop: J-upper-bound} applied to the function $\tilde v(y) = v(y)-\langle x_0, y\rangle$ gives
		\[J(v)\leq J(\tilde v)\leq  \frac{n+2}{n+1}\frac{|(\Omega-x_0)^{\circ}|}{|u(x_0)|}\leq \frac{C(n)}        {|\Omega|(-\inf u)}. \]
		If we combine the two inequalities, we have
		\[|\Omega||\Omega^{\circ}|\leq C(n),\]
		which implies that $\Omega$ is balanced about the origin. 
	\end{proof}

	\section{Proof of Theorem~\ref{thm: main-theorem}}\label{sec: proofOfMain}
	\subsection{Solution of a variational problem}
	Let us define the functional
	\begin{equation}
		\mathcal E(v):= -\log I(v)+J^{-\frac{1}{n+1}}(v). 
	\end{equation}
 
        \begin{lem}\label{lem: lower-bound-for-E} 
		There exist constants $c>0$ and $C$, depending on $P$, such that for any $v$ satisfying     
		\[\int_P\frac{y}{v^{n+2}(y)}dy = 0,\] 
		we have
		\begin{equation}\label{ineq: E-lower-bdd}
			\mathcal E(v)\geq -\log ((-\inf u))^{k+1}|\Omega|)+c\left((-\inf u))|\Omega|\right)^{\frac{1}{n+1}}-C. 
		\end{equation}
	\end{lem}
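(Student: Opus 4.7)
The plan is to bound the two terms of $\mathcal{E}(v) = -\log I(v) + J^{-1/(n+1)}(v)$ separately, with the first giving the logarithmic term on the right and the second giving the positive power term. The constraint $\int_P y\,v^{-(n+2)}dy = 0$ is exactly the hypothesis needed to invoke Corollary~\ref{cor: balanced-Omega}, so both estimates should be essentially immediate once we recall what has been established.

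First, I would handle $-\log I(v)$. By definition, $(-u)_+$ is supported on $\Omega = \{u<0\}$, and on $\Omega$ we have the trivial pointwise bound $(-u)_+(x) \leq -\inf u$. Integrating gives
\[
I(v) = \frac{1}{k+1}\int_\Omega (-u)^{k+1}\,dx \leq \frac{1}{k+1}(-\inf u)^{k+1}|\Omega|,
\]
and taking $-\log$ produces
\[
-\log I(v) \geq -\log\bigl((-\inf u)^{k+1}|\Omega|\bigr) + \log(k+1),
\]
which is the first term of \eqref{ineq: E-lower-bdd} (up to the absorbable constant $\log(k+1)$).

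Next, I would handle $J^{-1/(n+1)}(v)$. Since $v\in \mathcal{C}_{n+2}(P)$, Corollary~\ref{cor: balanced-Omega} gives the upper bound
\[
J(v) \leq \frac{C(n)}{|\Omega|\,(-\inf u)},
\]
and inverting and taking the $(n+1)$-st root yields
\[
J^{-1/(n+1)}(v) \geq C(n)^{-1/(n+1)}\bigl(|\Omega|(-\inf u)\bigr)^{1/(n+1)},
\]
which supplies the second term with $c = C(n)^{-1/(n+1)}$.

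Adding the two estimates gives precisely \eqref{ineq: E-lower-bdd}, with $C$ collecting $\log(k+1)$ and any constants depending on $P$ through $C(n)$. There is no genuine obstacle here — the only subtlety is verifying that the balanced-Omega corollary is applicable, which is guaranteed by the stated hypothesis on $v$; everything else is bookkeeping.
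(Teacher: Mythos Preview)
Your proof is correct and follows exactly the same approach as the paper: bound $I$ from above by $\frac{(-\inf u)^{k+1}|\Omega|}{k+1}$ using the trivial pointwise inequality, and bound $J$ from above via Corollary~\ref{cor: balanced-Omega} (which is where the hypothesis $v\in\mathcal C_{n+2}(P)$ enters). The paper's own proof is in fact slightly terser, but the two are substantively identical.
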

	\begin{proof}
		First, we note that $I$ can be bounded from above straightforwardly,
		\[I(u) = \frac{1}{k+1}\int_{\Omega}(-u)^{k+1}\leq \frac{(-\inf u)^{k+1}|\Omega|}{k+1}.\]
		To bound $J$ from above, we use Corollary~\ref{cor: balanced-Omega} to obtain
		\[J(u)\leq \frac{C(n)}{|\Omega|(-\inf u)}. \]
		Therefore we have proved the Lemma.
	\end{proof}

	\begin{theorem}\label{thm: minimize-functional}
		The functional $\mathcal E$ has a minimizer in $\mathcal C_{n+2}(P)$.
	\end{theorem}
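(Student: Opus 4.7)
The plan is the direct method of the calculus of variations. Let $\{v_j\}\subset\mathcal{C}_{n+2}(P)$ be a minimizing sequence with $\mathcal{E}(v_j)\to\inf\mathcal{E}$. Since the rescaling $v\mapsto\lambda v$ preserves $\mathcal{C}_{n+2}(P)$, and one computes
\[
\mathcal{E}(\lambda v)=-(n+k+1)\log\lambda-\log I(v)+\lambda J^{-1/(n+1)}(v),
\]
whose unique minimum in $\lambda$ occurs at $\lambda_{*}=(n+k+1)J^{1/(n+1)}(v)$, I may first normalize so that $J(v_j)=(n+k+1)^{-(n+1)}=:C_J$ is a fixed constant along the sequence.

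Next I extract uniform bounds. Combining the coercivity of Lemma~\ref{lem: lower-bound-for-E} with $\mathcal{E}(v_j)\le M$ and the upper bound from Corollary~\ref{cor: balanced-Omega} traps both $(-\inf u_j)^{k+1}|\Omega_j|$ and $(-\inf u_j)|\Omega_j|$ in bounded positive intervals. The a priori gradient bound $|\nabla u_j|\le R_P:=\sup_{y\in\overline{P}}|y|$ implies $\Omega_j$ contains a ball of radius $(-\inf u_j)/R_P$ around the minimizer of $u_j$, so $|\Omega_j|\ge c(-\inf u_j)^n$; combining these yields $(-\inf u_j)\in[c_1,C_1]$ and $|\Omega_j|\in[c_2,C_2]$ uniformly, together with a uniform positive lower bound on the inradius of $\Omega_j$. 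A John's-lemma argument then bounds $\diam(\Omega_j)$ in terms of volume and inradius, and since $0\in\Omega_j$ one obtains $\Omega_j\subset B(0,D)$ uniformly. Hence $u_j$ is uniformly $R_P$-Lipschitz on $\mathbb{R}^n$ with $u_j(0)\in[-C,0]$, and Arzel\`a--Ascoli produces a subsequence converging locally uniformly to a convex limit $u_\infty$. Set $v_\infty:=u_\infty^{\star}|_{\overline{P}}$.

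It remains to show that the functionals and the constraint pass to the limit. Because $(-u_j)_+^{k+1}$ is supported in $B(0,D)$ and uniformly bounded, dominated convergence yields $I(v_j)\to I(v_\infty)$. For $J$ and the barycenter constraint $\int_P y/v^{n+2}\,dy=0$, a uniform lower bound $v_j\ge c_0>0$ on $\overline{P}$ furnishes the domination needed to conclude $J(v_j)\to J(v_\infty)$ and that the constraint passes to the limit, so that $v_\infty\in\mathcal{C}_{n+2}(P)$ and $\mathcal{E}(v_\infty)=\lim\mathcal{E}(v_j)=\inf\mathcal{E}$.

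The hardest part is establishing this uniform positive lower bound on $v_j$. Although $v_j(0)=-\inf u_j\ge c_1$ holds at the origin, the true minimum $\inf v_j=-u_j(0)$ depends on where the origin sits inside $\Omega_j$, and the constraint $v_j\in\mathcal{C}_{n+2}(P)$ is a weighted barycenter condition rather than a direct centering of the minimizer of $u_j$. The key input will be the balanced property $|\Omega_j||\Omega_j^{\circ}|\le C(n)$ from Corollary~\ref{cor: balanced-Omega}, which together with the geometric bounds above forces the minimizer of $u_j$ to lie a definite distance inside $\Omega_j$, so that $-u_j(0)$ stays bounded away from zero uniformly in $j$.
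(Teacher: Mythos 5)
Most of your outline tracks the paper's argument closely: the coercivity from Lemma~\ref{lem: lower-bound-for-E}, the Lipschitz/ABP bound $|\Omega_j|\gtrsim(-\inf u_j)^n$, the resulting two-sided bounds on $-\inf u_j$ and $|\Omega_j|$, and the use of Corollary~\ref{cor: balanced-Omega} to place the origin a definite distance inside $\Omega_j$ and hence get $\inf_{\overline P}v_j=-u_j(0)\ge c_0>0$ are exactly the steps in the paper (your scaling normalization $J(v_j)=(n+k+1)^{-(n+1)}$ is a harmless extra, and your last paragraph correctly identifies how the lower bound on $v_j$ is obtained). However, there is a genuine gap on the \emph{other} side: you never obtain a uniform upper bound for $v_j$ on $\overline P$, and without one the limit need not lie in $\mathcal C_{n+2}(P)$. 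Your a priori bounds control $u_j$ only on compact sets; they say nothing about the slopes of $u_j$ far from $\Omega_j$, i.e.\ about the Monge--Amp\`ere mass of $u_j$ outside $\overline{\Omega_j}$. Concretely, consider $v_j=\tilde v+j\,g$ (recentered via Proposition~\ref{prop: min-over-shifts}) with $g\ge 0$ convex and vanishing on half of $P$: every bound in your second and third paragraphs holds uniformly, $u_j=(\tilde v+jg)^{\star}$ decreases locally uniformly to $u_\infty=(\tilde v|_{\{g=0\}})^{\star}$, yet $v_\infty=u_\infty^{\star}$ is identically $+\infty$ on a set of positive measure in $P$, so $v_\infty\notin\mathcal C_{n+2}(P)$. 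Since $J(v_j)\to J(v_\infty)$ computed with the convention $(+\infty)^{-(n+1)}=0$, the functional values still converge, so nothing in your argument excludes a minimizing sequence that degenerates in this way, and your conclusion ``$v_\infty\in\mathcal C_{n+2}(P)$'' does not follow.

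The paper closes this gap with Lemma~\ref{lem: remove-mass-outside-Omega}: replacing $u_j$ by its restriction to $\overline{\Omega_j}$ extended by $+\infty$ (and recentering) produces $\tilde v_j$ with $I(\tilde v_j)=I(v_j)$ and $J(\tilde v_j)\ge J(v_j)$, hence $\mathcal E(\tilde v_j)\le\mathcal E(v_j)$, and with $\tilde v_j=\phi_{\tilde\Omega_j}$ on $\partial P$. Since $\tilde\Omega_j\subset B_R$ by your own bounds, this yields $\sup_{\overline P}\tilde v_j\le R\sup_{\overline P}|y|+(-\inf u_j)\le C$ together with a uniform Lipschitz bound for $\tilde v_j$ on $\overline P$, so Arzel\`a--Ascoli applies to the $v_j$ themselves and the limit is a finite convex function on $\overline P$ lying in the admissible class. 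You need this normalization of the minimizing sequence (or some substitute controlling $u_j$ at infinity) before passing to the limit; the rest of your plan then goes through as written.
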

	Before we prove the theorem, we will need the following Lemma, which says that the energy $\mathcal E$ decreases when we move all the Monge-Ampere mass of $u$ outside $\bar\Omega$ onto its boundary. 
	
	\begin{lem}\label{lem: remove-mass-outside-Omega}
		Given any $v\in \mathcal C_{n+2}(P)$, there exist $\tilde v\in C_{n+2}(P)$ such that 
		\begin{enumerate}
			\item $I(v) = I(\tilde v)$. 
			\item $J(\tilde v)\geq J(v)$. 
			\item $\tilde v\geq \phi_{\tilde\Omega}$ and $\tilde v = \phi_{\tilde\Omega}$ on $\partial P$. 
		\end{enumerate}
	\end{lem}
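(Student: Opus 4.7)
My plan is to construct $\tilde v$ by modifying $u$ outside $\bar\Omega$ via an infimal convolution, and then applying a shift to impose the balanced condition. Let $u$ denote the Legendre transform of $v$ and let $\phi_{\bar P}(z) := \sup_{y \in \bar P}\langle z, y\rangle$ denote the support function of $\bar P$. I would set
\[
\tilde u_1(x) := \inf_{x' \in \bar\Omega}\bigl(\phi_{\bar P}(x - x') + u(x')\bigr)
\]
and take $\tilde v_1$ to be its Legendre dual. Since $\nabla u \in \bar P$, the function $u$ is $\phi_{\bar P}$-Lipschitz, and combining this Lipschitz bound with the definition of $\tilde u_1$ as an infimum gives $\tilde u_1 = u$ on $\bar\Omega$ and $\tilde u_1 \geq u$ globally. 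In particular $\tilde u_1 \geq 0$ off $\bar\Omega$, so $\tilde\Omega_1 := \{\tilde u_1 < 0\} = \Omega$ and $I(\tilde v_1) = I(v)$. Legendre duality then yields $\tilde v_1 \leq v$ on $\bar P$ and hence $J(\tilde v_1) \geq J(v)$.

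Next I would verify property (3). A direct computation arising from the inf-convolution structure gives
\[
\tilde v_1(y) = \sup_{x' \in \bar\Omega}\bigl(\langle x', y\rangle - u(x')\bigr) \quad \text{for } y \in \bar P,
\]
from which $\tilde v_1 \geq \phi_\Omega = \phi_{\tilde\Omega_1}$ is immediate since $-u \geq 0$ on $\bar\Omega$. For equality on $\partial P$, I perform a case analysis on whether the unconstrained maximizer $x^{\ast} = \nabla v(y)$ lies in $\bar\Omega$. If $\nabla v(y) \notin \bar\Omega$, the constrained supremum is attained on $\partial\Omega$, where $u$ vanishes, giving exactly $\phi_\Omega(y)$; if $\nabla v(y) \in \bar\Omega$, standard convex duality forces $\phi_\Omega(y) = v(y)$ and the sup equals this common value. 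Either way, $\tilde v_1 = \phi_{\tilde\Omega_1}$ on $\partial P$. To enforce the balanced condition, I invoke Proposition~\ref{prop: min-over-shifts} to find the unique $x_0$ for which $\tilde v(y) := \tilde v_1(y) - \langle x_0, y\rangle$ lies in $\mathcal{C}_{n+2}(P)$. Translation preserves $I$ and the boundary identity (with $\tilde\Omega = \Omega - x_0$), and the inequality $\tilde v_1 \leq v$ together with the balancedness of $v$ gives
\[
J(\tilde v) = \min_x J(\tilde v_1 - \langle x, \cdot\rangle) \geq \min_x J(v - \langle x, \cdot\rangle) = J(v).
\]

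The main obstacle is the case analysis establishing $\tilde v_1(y) = \phi_\Omega(y)$ for $y \in \partial P$. The underlying duality fact that $\phi_\Omega(y) = v(y)$ exactly when $\partial v(y) \cap \bar\Omega \neq \emptyset$ is straightforward for smooth, strictly convex $v$, but the general case requires either a careful subdifferential analysis or an approximation argument in which one regularizes $v$ by smooth strictly convex functions, applies the construction, and passes to the limit using stability of the infimal convolution. The remaining steps reduce to routine manipulations in convex analysis and Fenchel duality.
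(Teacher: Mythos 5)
Your construction is the same as the paper's: the inf-convolution $\tilde u_1$ is precisely the minimal convex extension of $u|_{\bar\Omega}$, whose Legendre dual is the paper's $\tilde v(y)=\sup_{x\in\bar\Omega}(\langle x,y\rangle-u(x))$ (pre-shift), and your verifications of (1)--(3), together with the final shift by the $x_0$ of Proposition~\ref{prop: min-over-shifts}, proceed along the same lines (your monotonicity argument $J(\tilde v)=\min_x J(\tilde v_1-\langle x,\cdot\rangle)\geq\min_x J(v-\langle x,\cdot\rangle)=J(v)$ is if anything a cleaner phrasing of the paper's chain of inequalities). The one step you flag as delicate --- showing $\tilde v_1=\phi_{\bar\Omega}$ on $\partial P$ when $\partial v(y)$ meets $\bar\Omega$ --- is also the terse point in the paper's own proof; both arguments are completed by the observation that for $y\in\partial P$ the subdifferential $\partial v(y)$ (of $v$ viewed as $+\infty$ off $\bar P$) is stable under adding the outward normal cone at $y$, so it contains a ray $x^*+t\vec n$, $t\geq 0$, which must exit the bounded set $\Omega$ through a point of $\partial\Omega$ where $u=0$, forcing the supremum to be attained on $\partial\Omega$ in your Case B as well.
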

	\begin{proof}
		Let $\tilde v(y):= \sup_{x\in \overline\Omega}(\langle x, y\rangle-u(x))+\langle y, x_0 \rangle$, where the $x_0$ is chosen by Proposition~\ref{prop: min-over-shifts} so that 
		\[\int_P\frac{y}{\tilde v(y)^{n+2}}\, dy= 0. \]
		Then $\tilde u(x):= \sup_{y\in P}(\langle x, y\rangle-\tilde v(y))$, and we have the obvious inequalities 
		\[\tilde v(y)\leq v(y)+\langle x_0, y\rangle\]
		\[\tilde u(x)\geq u(x-x_0)\]
		but we also note that $\tilde v$ is the Legendre transform of the convex function 
		\[\hat u(x) = \begin{cases}u(x-x_0)& \text{ if }x\in x_0+\Omega\\
			\infty &\text{otherwise}
		\end{cases}\]
		therefore from the fact that $\hat u^{\star\star} = \hat u$, we obtain $x-x_0\in\Omega$, $u(x-x_0) = \tilde u(x)$. From this, we must have
		\[I(\tilde v) = I(v). \]
		Now we prove claim 2. We note that by Proposition~\ref{prop: min-over-shifts}, we have
		\[J(v)\leq J(v-\langle x_0, \cdot\rangle). \]
		and 
		\[\tilde v(y) = \sup_{x\in \Omega}(\langle x, y\rangle-u(x))+\langle y, x_0 \rangle\leq \sup_{x\in \mathbb R^n}(\langle x, y\rangle-u(x))+\langle y, x_0 \rangle = v(y)+\langle y, x_0 \rangle\]
		therefore we have
		\[J(v)\leq J(v-\langle x_0, \cdot\rangle)\leq J(\tilde v).  \]
		For the third claim, it's clear that $\tilde v\geq \phi_{\tilde\Omega}$, so we must show that $\tilde v(y) = \phi_{\tilde \Omega}(y)$ for $y\in \partial P$. Fix any $y\in \partial P$, and notice that $\tilde v = \hat u^{\star}$ on $P$, therefore it suffices to show that the supporting hyperplane $\langle x ,y\rangle-\tilde v(y)$ of $\hat u$ touches $\hat u$ at a boundary point $x\in \partial(x_0+\Omega)$, then we must have
		$\tilde v(y) = \langle x, y\rangle-u(x) = \langle x, y\rangle \leq  \phi_{\tilde\Omega}(y)$ which implies $\tilde v(y) = \phi_{\tilde \Omega}(y)$. We prove this by contradiction, suppose $\langle x ,y\rangle-\tilde v(y)$ touches $\hat u$ only at interior points, then $y$ must be in the interior of $\nabla \hat u(\Omega)$, but $y\in \partial P$ and $\nabla \hat u(\Omega)\subset \overline P$, which is a contradiction, and therefore we've proven the Lemma. 
	\end{proof}

	\begin{proof}[Proof of Theorem~\ref{thm: minimize-functional}]
		Let $v_j\in \mathcal C_{n+2}(P)$ be a minimizing sequence of functions such that
		\[\mathcal E(v_j)\to \inf_{v\in C_{n+2}(P)}\mathcal E(v), \]
		and let $u_j: \mathbb R^n\to \mathbb R$ be the Legendre transform of $v_j$, and denote $\Omega_j:= \{x: u_j(x)<0\}$.
		By Lemma~\ref{lem: remove-mass-outside-Omega}, we can assume without loss of generality that $v_j$ satisfy $v_j\geq \phi_{\Omega_j}$. By Lemma~\ref{lem: lower-bound-for-E}, we know that 
		\begin{align}C &\geq\mathcal E(v_j)\\
			&\geq -\log ((-\inf u_j)^{k+1}|\Omega_j|)+c((-\inf u_j)|\Omega_j|)^{\frac{1}{n+1}}-C\\
			&\geq -\log ((-\inf u_j)^{k+1}|\Omega_j|)+c'((-\inf u_j)^{k+1}|\Omega_j|)^{\frac{1}{n+k+1}}-C
		\end{align}
		where the last line follows from the ABP maximum principle bound $|\Omega_j|\geq c(n)|P|^{-1}(-\inf u_j)^n$. From this we obtain a uniform bound
		\begin{equation}\label{eq: I(u_j)-bound}
			C^{-1}\leq (-\inf u_j)^{k+1}|\Omega_j|\leq C,
		\end{equation}
		which implies that $|-\log I(u_j)|\leq C$ is uniformly bounded. From this we also obtain the bound $J^{-\frac{1}{n+1}}(u_j)\leq C$, which implies 
		\begin{equation}\label{eq: J-lower-bound}
			(-\inf u_j)|\Omega_j|\leq C. 
		\end{equation}
		From \eqref{eq: I(u_j)-bound}, \eqref{eq: J-lower-bound} and the ABP bound, we obtain
		\begin{equation}
			C^{-1} \leq -\inf u_j \leq C
		\end{equation}
		and 
		\begin{equation}
			C^{-1} \leq |\Omega_j| \leq C. 
		\end{equation}
		and moreover from Corollary~\ref{cor: balanced-Omega}, we also know that $\Omega_j$ is balanced about the origin, thus we have 
		\[0<c\leq c(-\inf u_j)\leq |u_j(0)|\leq C(-\inf u_j)\leq C\]
		thus there exist a subsequence for which $\lim_{j\to \infty}u_j(0)\to -c<0$ which implies a lower bound for $v_j$. Moreover since $|\nabla u_j|\leq C$, there exist a small $r$ such that $B_r(0)\subset\Omega_j$ and therefore by the volume upper bound we also have 
		\[B_r(0)\subset\Omega_j\subset B_R(0). \]
		From this, we can see that $v_j$ is bounded from above as well. Therefore, we can extract a convergent subsequence that uniformly converges (this follows from the gradient bounds) $v_j\to v_{\infty}$, and it is clear that $u_j\to u_{\infty} = v_{\infty}^{\star}$ as well. $v_{\infty}$ must be a minimizer of $\mathcal E$. 
    \end{proof}
    \begin{comment}
	\textcolor{red}{Now we prove uniqueness. Suppose we have two critical points $v_1$ and $v_2$. Then consider the linear combination $v_t = tv_1+(1-t)v_0$. By ... $J^{-\frac{1}{n+1}}(v_t)$ is a concave function of $t$. 
		\begin{claim}
			There exist a continuous family of vectors $x_t\in \Omega_t = \{u_t<0\}$ such that 
			\[J^{-\frac{1}{n+2}}(v_t+\langle x_t, \cdot \rangle) = (1-t)J^{-\frac{1}{n+1}}(v_0)+tJ^{-\frac{1}{n+1}}(v_1). \]  
		\end{claim}
		\begin{proof}[proof of Claim]
		From this claim, it follows then $\mathcal E(v_t+\langle x_t, \cdot \rangle) = -\log I(v_t+\langle x_t, \cdot \rangle)+J^{-\frac{1}{n+1}}(v_t+\langle x_t, \cdot \rangle)$ is a convex function of $t$ which has two critical points at $0$ and $1$, therefore it must be a constant. It follows then $-\log I(v_t)$ is a linear function, and by... we conclude that $v_0= v_1$. 
	\end{proof}}
    \end{comment}
    \subsection{Solution of \eqref{eqn: main-equation}}
	Now we show that the solution of the variational problem that we obtained is a solution of \eqref{eqn: main-equation}. Let $v\in \mathcal C_{n+2}(P)$ be the minimizer of $\mathcal E$ in $\mathcal C_{n+2}(P)$ from Theorem~\ref{thm: minimize-functional}, and $u(x) = \sup_{y\in P}(\langle x, y\rangle-v(y))$ be its Legendre tranform. Define the probability measures $d\mu$ supported on $P$ and $d\nu$ supported on $\Omega$ by
	\begin{equation}
		d\mu := \frac{(-u)_+^{k}dx}{\int_{\mathbb R^n}(-u)_+^{k} }
	\end{equation}
	and 
	\begin{equation}
		d\nu = \frac{\frac{dy}{v^{n+2}(y)}}{\int_Pv^{-(n+2)}}. 
	\end{equation}
	
	\begin{prop}\label{prop: naba u-solves-optimization-problem}
				For any bounded convex function $\hat v:\overline P\to \mathbb R$, let $\hat u(x)= \sup_{y\in \overline P}(\langle x, y\rangle-\hat v(y))$ be its Legendre transform, then we have
				\begin{equation}
					\int_{P}v d\nu+\int_{\Omega}ud\mu\leq \int_{P}\hat v d\nu+\int_{\Omega}\hat ud\mu. 
				\end{equation}
	\end{prop}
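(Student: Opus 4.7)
The plan is to establish this as a form of Kantorovich duality for the Legendre-dual pair $(u,v)$: namely, that $(u,v)$ minimizes the functional $G(\hat v) := \int_P \hat v \, d\nu + \int_\Omega \hat u \, d\mu$ among Legendre-dual pairs. The strategy rests on two ingredients: (i) a pushforward identity $(\nabla u)_* \mu = \nu$, extracted from the Euler--Lagrange equation of the minimization problem in Theorem~\ref{thm: minimize-functional}, and (ii) the Fenchel--Young inequality applied pointwise and then averaged against $d\mu$.

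For ingredient (i), a crucial observation is that by Proposition~\ref{prop: min-over-shifts} the constraint $v \in \mathcal{C}_{n+2}(P)$ is equivalent to $v$ minimizing $J$ along its translation orbit $\{v - \langle x_0, \cdot \rangle : x_0 \in \mathbb{R}^n\}$; together with the translation invariance of $I$, an envelope-theorem argument shows that the unconstrained first variation of $\mathcal E$ at $v$ vanishes, with no Lagrange multiplier coming from the constraint. Explicitly, for smooth bounded $w : \overline{P} \to \mathbb{R}$, set $v_t = v + tw$ with Legendre transform $u_t$; the standard Legendre computation $\dot u_t|_{t=0}(x) = -w(\nabla u(x))$ (valid for a.e.\ $x$ by Alexandrov's differentiability theorem for convex functions) then yields
\begin{equation*}
0 = \frac{d}{dt}\bigg|_{t=0} \mathcal E(v_t) = -\frac{1}{I(v)}\int_\Omega (-u)^k w(\nabla u)\,dx + \frac{J^{-(n+2)/(n+1)}(v)}{n+1}\int_P v^{-(n+2)} w\,dy.
\end{equation*}
Since $w$ is arbitrary, this forces the measure identity $(\nabla u)_* ((-u)^k dx) = C \, v^{-(n+2)} dy$ on $P$ for an explicit constant $C>0$; testing against $w \equiv 1$ shows both sides have equal total mass, so normalizing to probability measures gives $(\nabla u)_* \mu = \nu$.

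For ingredient (ii), Fenchel--Young gives $\hat u(x) + \hat v(y) \geq \langle x, y \rangle$ for all $(x,y)$, while the dual pair $(u,v)$ realizes the equality at $y = \nabla u(x)$: $u(x) + v(\nabla u(x)) = \langle x, \nabla u(x) \rangle$. Subtracting gives $(\hat u - u)(x) + (\hat v - v)(\nabla u(x)) \geq 0$ for a.e.\ $x \in \Omega$. Integrating against $d\mu$ and using the pushforward identity to rewrite $\int_\Omega (\hat v - v)(\nabla u) \, d\mu = \int_P (\hat v - v) \, d\nu$ yields the claimed inequality.

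The main technical obstacle will be justifying the a.e.\ formula $\dot u_t|_{t=0}(x) = -w(\nabla u(x))$ and the interchange of differentiation and integration, since the minimizer $v$ of Theorem~\ref{thm: minimize-functional} is not a priori smooth. This can be handled either directly via Alexandrov's theorem combined with dominated convergence, using the uniform $L^\infty$ bounds on $u$ and $v$ from the proof of Theorem~\ref{thm: minimize-functional}, or by first smoothing $v$ and $\hat v$ via the mollification-plus-strictly-convex-perturbation scheme already employed in Lemma~\ref{lem: J-lower-bound} and then passing to the limit.
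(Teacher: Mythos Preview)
Your approach is essentially correct and genuinely different from the paper's, but there is one subtle point you gloss over, and the obstacle you flag is not the real one.

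To obtain the \emph{two-sided} Euler--Lagrange identity $\mathcal E'(v;w)=0$ for arbitrary bounded $w$, you need $v+tw$ (after the linear renormalization) to be an admissible competitor for $t$ of \emph{both} signs. If the minimization in Theorem~\ref{thm: minimize-functional} is over convex functions, this fails: for generic $w$, $v+tw$ need not be convex, and you only get the one-sided inequality $\mathcal E'(v;\hat v-v)\geq 0$ for convex $\hat v$. From one-sided information alone one cannot deduce the measure identity $(\nabla u)_*\mu=\nu$ (convex-order domination does not force equality of measures). There are two ways to close this: (a) observe that $v$ actually minimizes $\mathcal E$ over \emph{all} positive functions with the barycenter constraint, not just convex ones --- this follows from Lemma~\ref{lem: remove-mass-outside-Omega}, whose construction takes any $v\in\mathcal C_{n+2}$ to a convex $\tilde v\in\mathcal C_{n+2}$ with $\mathcal E(\tilde v)\leq\mathcal E(v)$ --- after which the functionals $I,J$ extend naturally and two-sided variations are legal; or (b) skip the pushforward identity and combine the one-sided inequality $\int_\Omega(\hat v-v)(\nabla u)\,d\mu\leq\int_P(\hat v-v)\,d\nu$ (plus the scaling identity from $w\equiv 1$) directly with Fenchel--Young, which already yields the Kantorovich inequality. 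By contrast, the issue you single out --- justifying $\dot u_t|_{t=0}=-w(\nabla u)$ and interchanging limits --- is routine: $u_t(x)=\sup_y(\langle x,y\rangle-v(y)-tw(y))$ is jointly convex in $(x,t)$, the argmax is unique wherever $u$ is differentiable, and dominated convergence holds on the uniformly bounded sets $\Omega_t$.

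The paper takes a different route that avoids differentiation altogether. It constructs a single convex competitor $\tilde v=\hat v+t_0-\langle x_0,\cdot\rangle\in\mathcal C_{n+2}$ with $J(\tilde v)=J(v)$ (via an intermediate-value argument), so that minimality gives $I(\tilde v)\leq I(v)$; then the elementary convexity of $t\mapsto t^{-(n+1)}$ and $s\mapsto(-s)_+^{k+1}$ converts $J(\tilde v)=J(v)$ and $I(\tilde v)\leq I(v)$ into the two integral inequalities which sum to the Kantorovich bound. Your route, once repaired, has the conceptual advantage of exhibiting the Brenier pushforward identity $(\nabla u)_*\mu=\nu$ directly (the paper only recovers it afterwards via \cite{Caffarelli}); the paper's route is more elementary, needing no first-variation calculus at a possibly non-smooth minimizer.
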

	\begin{proof}
		Notice that since $d\mu$ and $d\nu$ are both probability measures, hence the right-hand side is unchanged by adding a constant to $\hat v$. Without loss of generality, we can then assume $\hat v>0$ by adding a large enough positive constant.
		\begin{lem}
			There is a linear function $l(y) = t_0+\langle x_0, y\rangle$ such that $\tilde v(y)=\hat v(y)+l(y)$ satisfies
			\begin{enumerate}
				\item $\tilde v >0$. 
    			\item $\tilde v\in \mathcal C_{n+2}$. 
				\item $J(\tilde v) = J(v)$. 
			\end{enumerate}
		\end{lem}
		\begin{proof}[Proof of Lemma]
			For $t\in (\inf \hat u, \infty)$, let $\tilde v_t(y) = \hat v(y)+t-\langle y, x_t\rangle$, where $x_t$ is chosen according to Proposition~\ref{prop: min-over-shifts}, and from this it's easy to see that $x_t$ is continuous with respect to the parameter $t$. Then $J(v_t)$ is a continuous function of $t$ and  by Proposition~\ref{prop: min-over-shifts}, we have
			\[\lim_{t\to \infty}J(\tilde v_t)\leq \lim_{t\to \infty}J(\hat v+t) = 0. \]
			If we let $\Omega_t = \{u<t\}$, then by Lemma~\ref{lem: J-lower-bound}, we have
			\[\lim_{t\to \inf \hat u}J(\tilde v_t)\geq c(n)\lim_{t\to \inf \hat u}\frac{|(\Omega_t-x_t)^{\circ}|}{(-\inf \hat u+t)} =\infty.\]
			Therefore by the intermediate value theorem, there exist $t_0\in (\inf \hat u, \infty)$ such that $J(\tilde v_{t_0}) = J(v)$. This proves the Lemma. 
		\end{proof}
		Let $\tilde v = \hat v+t_0-\langle x_0, \cdot \rangle$ be chosen as in the preceding Lemma. Since $v$ minimizes the energy $\mathcal E$ over the space $\mathcal C_{n+2}(P)$, we have
		\[-\log I(v)+J^{-\frac{1}{n+1}}(v)\leq -\log I(\tilde v)+J^{-\frac{1}{n+1}}(\tilde v)\]
		and since $J(v) = J(\tilde v)$, we get $I(\tilde v)\leq I(v)$. 
        
        By the convexity of the functions $\mathbb R^+\ni t\mapsto t^{-(n+1)}$ and $\mathbb R\ni s\mapsto (-s)_+^{k+1}$, we have
		\[(n+1)(v-\tilde v)v^{-(n+2)}\leq \tilde v^{-(n+1)}-v^{-(n+1)}\]
		and
		\[(k+1)(u-(\hat u-t_0))(-u)_+^k\leq (-\hat u+t_0)_+^{k+1}-(-u)_+^{k+1}\]
		and integrating this gives the inequalities
		\begin{equation}\label{eq: convex-J}
			\int_P(v-\tilde v)d\nu\leq \frac{J(\tilde v)-J(v)}{\int_Pv^{-(n+2)}} = 0
	\end{equation}
		and \begin{equation}\label{eq: convex-I}
			\int_{\Omega}(u-(\hat u-t_0))d\mu\leq \frac{I(\hat u-t_0)-I(u)}{\int_\Omega (-u)_+^{k}} = \frac{I(\tilde u)-I(u)}{\int_\Omega (-u)_+^{k}}\leq 0. 
	\end{equation}
        Adding these two inequalities gives 
        \begin{equation}
            \int_P(v-\tilde v)d\nu+\int_{\Omega}(u-\hat u)d\mu+t_0\leq 0
        \end{equation}
    Now notice that by Proposition~\ref{prop: min-over-shifts}, $d\nu$ has barycenter at the origin, adding a linear function to $\tilde v$ does not change the left-hand side of equation~\ref{eq: convex-J}, therefore we have
    \begin{equation}
        \int_P(v-\hat v)d\nu-t_0=\int_P(v-(\hat v+t_0))d\nu = \int_P(v-\tilde v)d\nu. 
        \end{equation}
        The proposition follows by taking the sum of the two preceding inequalities.
\end{proof}
	
	\begin{theorem}\label{thm: u-solve-equation}
		$v\in C^{1, \alpha}(\overline P)\cap C^{\infty}(P)$ and satisfies the equation~\eqref{eqn: main-equation}. 
	\end{theorem}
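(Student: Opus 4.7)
The plan is to interpret Proposition~\ref{prop: naba u-solves-optimization-problem} as the Kantorovich dual characterization of a quadratic-cost optimal transport problem between $d\mu$ and $d\nu$, to derive the Monge--Amp\`ere equation via Brenier's theorem, and finally to invoke standard regularity theory.

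Proposition~\ref{prop: naba u-solves-optimization-problem} asserts that among all Legendre dual pairs $(\hat u,\hat v)$, the pair $(u,v)$ minimizes $\int \hat u\,d\mu+\int \hat v\,d\nu$. Since any Legendre dual pair satisfies the Fenchel--Young inequality $\hat u(x)+\hat v(y)\geq\langle x,y\rangle$ with equality precisely when $y=\nabla \hat u(x)$, this is exactly the Kantorovich dual characterization of the quadratic-cost optimal transport between $d\mu$ and $d\nu$. Brenier's theorem therefore identifies $\nabla u:\Omega\to \overline P$ as the optimal transport map, giving $(\nabla u)_\#\,d\mu=d\nu$. The push-forward identity together with the explicit densities yields
\begin{equation*}
\frac{(-u(x))_+^k}{Z_\mu}=\frac{\det D^2 u(x)}{v(\nabla u(x))^{n+2}\,Z_\nu},
\end{equation*}
and using the Legendre identity $v(\nabla u(x))=u^\star(x)$ this rearranges to~\eqref{eqn: legendre-transform-equation} for $u$ (up to the positive constant $c=Z_\nu/Z_\mu$). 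Passing to the Legendre transform via $v^\star(y)=u(x)$ and $\det D^2 v(y)\det D^2 u(x)=1$ at $y=\nabla u(x)$ yields~\eqref{eqn: main-equation} up to the same constant, which is normalized by $v\mapsto \lambda v$ with $\lambda^{2n+2+k}=c$. This scaling preserves the boundary condition $v^\star|_{\partial P}=0$, which itself follows from property (3) of Lemma~\ref{lem: remove-mass-outside-Omega}: any subgradient $x\in\partial v(y)$ with $y\in\partial P$ lies in $\partial\Omega=\{u=0\}$, so $v^\star(y)=u(x)=0$.

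For interior regularity, on any compact $K\Subset P$ the function $v$ is bounded below by $-u(0)>0$ (from Fenchel--Young applied at $x=0$) and above by $\max_{\overline P}\phi_\Omega$ (since $v$ is convex on $\overline P$ and agrees with $\phi_\Omega$ on $\partial P$), so $v^{-(n+2)}$ is pinched between positive constants on $K$; similarly $(-u)^k$ is pinched on any compact $K'\Subset\Omega$. Caffarelli's $C^{2,\alpha}$ regularity theory for Brenier maps between bounded convex domains with positive, H\"older-continuous densities then gives $v\in C^{2,\alpha}_{\rm loc}(P)$, and a Schauder bootstrap using the smooth dependence of the right-hand side of~\eqref{eqn: main-equation} on $v$ and $\nabla v$ promotes $v$ to $C^\infty(P)$.

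Finally, $v$ is Lipschitz on $\overline P$ since $\nabla v(P)\subset\overline\Omega$ is bounded, so the $C^{1,\alpha}(\overline P)$ claim reduces to H\"older continuity of $\nabla v$ up to $\partial P$. Because the source density $(-u)^k$ vanishes along the free boundary $\partial\Omega$, classical Caffarelli boundary regularity does not apply directly; instead one invokes the boundary regularity theory of Jhaveri--Savin~\cite{Jhaveri-Savin} for optimal transport with power-vanishing densities, which is precisely tailored to this degenerate setting. This last step is the main obstacle, while the interior PDE and smoothness follow routinely from the variational characterization.
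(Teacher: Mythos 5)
Your proposal is correct and follows essentially the same route as the paper: Proposition~\ref{prop: naba u-solves-optimization-problem} is read as the Kantorovich/Brenier dual characterization of the quadratic-cost transport between $d\mu$ and $d\nu$ (the paper cites Gangbo--McCann), the push-forward identity gives the Monge--Amp\`ere equation in the Alexandrov sense, and regularity comes from Caffarelli in the interior and Jhaveri--Savin for $C^{1,\alpha}$ up to the boundary. You spell out a few steps the paper leaves implicit (the normalizing constant and the verification of $v^{\star}|_{\partial P}=0$ via Lemma~\ref{lem: remove-mass-outside-Omega}), but the argument is the same.
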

	\begin{proof}
        Proposition~\ref{prop: naba u-solves-optimization-problem} implies that $\nabla u$ is the solution to an optimal transport problem from the measure $d\mu$ on $P$ to the measure $d\nu$ on $\mathbb R^n$ \cite{Gangbo-McCann}. By \cite{Caffarelli}, we know that $u$ solves \eqref{eqn: legendre-transform-equation} in the Alexandrov sense, and by the regularity theory of Caffarelli \cite{Caffarelli}, we know that $u\in C^{\infty}(\Omega)$ and $v\in C^{\infty}(P)$. Moreover by \cite[Theorem 1.1]{Jhaveri-Savin}, we see that $u\in C^{1, \alpha}(\mathbb R^n)$ and $v\in C^{1, \alpha}(\overline P)$ for some $\alpha>0$. 
	\end{proof}
	
	\begin{proof}[Proof of Theorem~\ref{thm: main-theorem}]
		This is simply the combination of Theorem~\ref{thm: minimize-functional} and Theorem~\ref{thm: u-solve-equation}. 
	\end{proof}

	\section{Regularity}\label{sec: regularity}
     In this section, we study the boundary regularity of $v$ and prove Theorem~\ref{thm: boundary-regularity}. An important observation here is that near a smooth point of $\partial P$, a homogenization of $v$ solves an optimal transport problem with degenerate densities between domains with {\em flat} boundaries. Hence the results of \cite{Jhaveri-Savin} can be readily applied. 

	Let $P\subset \mathbb R^n$ be a open convex set, we denote $C(P)\subset \mathbb R^{n+1} $ to be the affine cone over $P$ defined by
 \[C(P) = \{(\lambda y, \lambda)\in \mathbb R^{n+1}: \lambda>0, y\in P\}.\] 
 Then given any function $v:P\to \mathbb R_+$ and $m \in \mathbb R$, we can let $\varphi:C(P)\to \mathbb R_+$ be its homogenization of degree $m$ given by
    \begin{equation}\label{eq: homogenization}
        \varphi(\lambda y, \lambda) := (\lambda v(y))^m. 
    \end{equation}
     The following proposition is a calculation that shows if $v$ satisfy equation~\eqref{eqn: main-equation}, then $\varphi$ satisfy an equation closely related to \eqref{eq: genearlized-calabi-ansatz}. 
	\begin{prop}\label{prop: phi-solves-gen-Calabi-ansatz}
		Given a solution $v$ of \eqref{eqn: main-equation}, Let $\varphi$ its homogenization of degree $m = 1+\frac{n+1}{n+k+1}$, then $\varphi$ satisfy the equation
		\begin{equation}
				\left(\frac{\partial \varphi}{\partial x_{n+1}}\right)^{k}\det D^2\varphi  = \left(\frac{n+1}{n+k+1}\right)\left(\frac{2n+k+2}{n+k+1}\right)^{n+k+1}
		\end{equation}
		with boundary value 
		\[\frac{\partial \varphi}{\partial y_{n+1}} = 0 \text{ on }\partial C(P)\]
		Moreover \[\nabla \varphi(C(P)) = \mathbb R^{n+1}\cap\{y_{n+1}>0\}.\] 
	\end{prop}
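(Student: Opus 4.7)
The plan is a direct computation in the natural coordinates $y_i = x_i/x_{n+1}$ ($1\le i\le n$), in which $\varphi(x) = x_{n+1}^m\tilde w(y)$ with $\tilde w := v^m$.  A single round of the chain rule yields
\[
\partial_i\varphi = x_{n+1}^{m-1}\tilde w_i\ (i\le n),\qquad \partial_{n+1}\varphi = x_{n+1}^{m-1}(m\tilde w-y\cdot\nabla\tilde w) = -m\,x_{n+1}^{m-1}v^{m-1}v^\star,
\]
so the boundary condition $\partial_{n+1}\varphi|_{\partial C(P)\setminus\{0\}}=0$ follows immediately from $v^\star|_{\partial P}=0$.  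The image claim reduces, via the factorization $\nabla\varphi = m\,x_{n+1}^{m-1}v^{m-1}(\nabla v,-v^\star)$, to showing that the map $y\mapsto \nabla v(y)/(-v^\star(y))$ surjects onto $\mathbb R^n$ on $\mathrm{int}(P)$, which follows from $\nabla v:\mathrm{int}(P)\to\mathbb R^n$ being surjective (dual to $\nabla u(\mathbb R^n)=\overline P$) together with $-v^\star>0$ on $\mathrm{int}(P)$, $-v^\star|_{\partial P}=0$, and a properness argument.

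For the Monge--Amp\`ere equation I would compute $D^2\varphi$ in block form.  Setting $A:=D^2\tilde w$ and using $\nabla\tilde w^\star = Ay$, one finds
\[
D^2\varphi = x_{n+1}^{m-2}\begin{pmatrix} A & (m-1)\nabla\tilde w - Ay \\ (m-1)(\nabla\tilde w)^T - y^T\! A & (m-1)(m-2)\tilde w - 2(m-1)\tilde w^\star + y^T\!Ay\end{pmatrix}.
\]
Performing the column operation $\mathrm{col}_{n+1}\mapsto\mathrm{col}_{n+1}+\sum_jy_j\,\mathrm{col}_j$ followed by its symmetric row version collapses the $y$-dependent entries and, using $y\cdot\nabla\tilde w=\tilde w+\tilde w^\star$, produces the equivalent block form with $(m-1)\nabla\tilde w$ and $m(m-1)\tilde w$ in the off-diagonal/corner positions.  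The block-determinant formula, together with Sherman--Morrison applied to the rank-one perturbation $A = mv^{m-1}D^2v + m(m-1)v^{m-2}\,\nabla v\otimes\nabla v$, makes the factor $v+(m-1)\,\nabla v^T(D^2v)^{-1}\nabla v$ that appears in both $\det A$ and $(\nabla\tilde w)^T A^{-1}\nabla\tilde w$ cancel, yielding
\[
\det D^2\varphi = x_{n+1}^{(m-2)(n+1)}\,(m-1)\,m^{n+1}\,v^{(n+1)(m-1)+1}\,\det D^2v.
\]

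Multiplying by $(\partial_{n+1}\varphi)^k = m^k x_{n+1}^{k(m-1)}v^{k(m-1)}(-v^\star)^k$ and invoking the identity $(m-1)(n+k+1)=n+1$, the $x_{n+1}$-exponent cancels and the $v$-exponent becomes $n+2$; substituting $\det D^2v = v^{-(n+2)}(-v^\star)^{-k}$ from \eqref{eqn: main-equation} then cancels the remaining $v$ and $v^\star$ factors, leaving the dimensional constant
\[
(m-1)\,m^{n+k+1} = \frac{n+1}{n+k+1}\bigl(\tfrac{2n+k+2}{n+k+1}\bigr)^{n+k+1}.
\]

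The main obstacle is the bookkeeping in the Hessian determinant computation.  Both the block-matrix reduction and the Sherman--Morrison step are routine in character, but it is crucial that the $(n+1,n+1)$ entry of the reduced block comes out to $m(m-1)\tilde w$ — and not, for instance, $(m-1)^2\tilde w$ — since this is precisely what causes the rank-one factor $v+(m-1)q$ to drop out and makes the $x_{n+1}$, $v$, and $v^\star$ factors collapse to a constant.
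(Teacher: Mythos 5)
Your proposal is correct and follows essentially the same route as the paper: a direct chain-rule computation of $D^2\varphi$ for the homogenization, followed by a block-determinant evaluation in which the rank-one term $v+(m-1)\,\nabla v^{T}(D^2v)^{-1}\nabla v$ cancels, leaving $(m-1)m^{n+1}x_{n+1}^{(m-2)(n+1)}v^{(n+1)(m-1)+1}\det D^2v$ and hence the stated constant. The paper organizes the cancellation via the Schur complement $\varphi_{(n+1)(n+1)}-\varphi^{ij}\varphi_{(n+1)i}\varphi_{(n+1)j}$ applied to the raw Hessian rather than your row/column reduction plus Sherman--Morrison, but the content is identical, and your treatment of the boundary condition and the gradient-image claim is consistent with (indeed slightly more explicit than) the paper's.
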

	\begin{proof}
		For $i, j \in \{1, \ldots, n\}$, the derivative of $\varphi$ is given by
		\[\varphi_i =m\left(y_{n+1}v(\frac{y_1}{y_{n+1}}, \ldots, \frac{y_n}{y_{n+1}})\right)^{m-1}v_i \]
		\[\varphi_{n+1} = m\left(y_{n+1}v(\frac{y_1}{y_{n+1}}, \ldots, \frac{y_n}{y_{n+1}})\right)^{m-1}\left(v-v_i\frac{y_i}{y_{n+1}}\right) = m\frac{\varphi}{y_{n+1}}-\varphi_i\frac{y_i}{y_{n+1}} \]
		the second derivatives of $\varphi$ is given by
		\begin{align}
			\varphi_{ij} &= m\left(y_{n+1}v(\frac{y_1}{y_{n+1}}, \ldots, \frac{y_n}{y_{n+1}})\right)^{m-1}\frac{v_{ij}}{y_{n+1}}\\ 
			&\qquad +(m-1)m\left(y_{n+1}v(\frac{y_1}{y_{n+1}}, \ldots, \frac{y_n}{y_{n+1}})\right)^{m-2}v_iv_j\\
			& = m\left(y_{n+1}v(\frac{y_1}{y_{n+1}}, \ldots, \frac{y_n}{y_{n+1}})\right)^{m-2}\left(vv_{ij}+(m-1)v_i v_j\right)
		\end{align}
		\begin{align}
			\varphi_{i(n+1)} &= m\left(y_{n+1}v(\frac{y_1}{y_{n+1}}, \ldots, \frac{y_n}{y_{n+1}})\right)^{m-1}\left(-v_{ik}\frac{y_k}{y_{n+1}^2}\right)\\
			&\qquad +(m-1)m\left(y_{n+1}v(\frac{y_1}{y_{n+1}}, \ldots, \frac{y_n}{y_{n+1}})\right)^{m-2}v_i\left(v-v_k\frac{y_k}{y_{n+1}}\right)\\
			%& = m\left(y_{n+1}v(\frac{y_1}{y_{n+1}}, \ldots, \frac{y_n}{y_{n+1}})\right)^{m-2}\left(-vv_{ik}\frac{y_k}{y_{n+1}}+(m-1)v_i\left(v-v_k\frac{y_k}{y_{n+1}}\right)\right)\\
			%& = m\left(y_{n+1}v(\frac{y_1}{y_{n+1}}, \ldots, \frac{y_n}{y_{n+1}})\right)^{m-2}\left(-(vv_{ik}+(m-1)v_iv_k)\frac{y_k}{y_{n+1}}+(m-1)vv_i\right)
		\end{align}
		\begin{align}
			\varphi_{(n+1)(n+1)} &= m\left(y_{n+1}v(\frac{y_1}{y_{n+1}}, \ldots, \frac{y_n}{y_{n+1}})\right)^{m-1}\left(v_{ik}\frac{y_iy_k}{y_{n+1}^3}\right)\\
			&\qquad +(m-1)m\left(y_{n+1}v(\frac{y_1}{y_{n+1}}, \ldots, \frac{y_n}{y_{n+1}})\right)^{m-2}\left(v-v_k\frac{y_k}{y_{n+1}}\right)^2\\
			%& = m\left(y_{n+1}v(\frac{y_1}{y_{n+1}}, \ldots, \frac{y_n}{y_{n+1}})\right)^{m-2}\left(vv_{ik}\frac{y_iy_k}{y_{n+1}^2}+(m-1)\left(v-v_i\frac{y_i}{y_{n+1}}\right)^2\right)\\
			%& =m\left(y_{n+1}v(\frac{y_1}{y_{n+1}}, \ldots, \frac{y_n}{y_{n+1}})\right)^{m-2}\left((vv_{ik}+(m-1)v_iv_k)\frac{y_iy_k}{y_{n+1}^2}+(m-1)v^2-2(m-1)vv_i\frac{y_i}{y_{n+1}}\right)
		\end{align}
		Therefore, we have
		\begin{align}
			\det D^2\varphi  &= (\det \varphi_{ij})(\varphi_{(n+1)(n+1)}-\varphi^{ij}\varphi_{(n+1)i}\varphi_{(n+1)j})\\
			&= m^{n+1}(y_{n+1}v)^{(m-2)(n+1)}v^n(\det v_{ij})(1+(m-1)v^{-1}v^{ij}v_iv_j)\left(\frac{(m-1) v^2}{1+(m-1)v^{-1}v^{ij}v_iv_j}\right)\\
			& = (m-1)m^{n+1}\varphi^{\frac{(m-2)(n+1)}{m}}v^{n+2}\det v_{ij}
		\end{align}
		and if $\det D^2v = \frac{1}{v^{n+2}(-v^{\star})^k}$ and $m-1 = \frac{n+1}{n+k+1}$, then we obtain
		\begin{align}
			\det D^2\varphi &= (m-1)m^{n+1}\varphi^{\frac{(m-2)(n+1)}{m}}(-v^{\star})^{-k}\\
			& = (m-1)m^{n+k+1}(\varphi_{n+1})^{-k}
		\end{align}
		
	\end{proof}
	
    This proposition allows us to prove Corollary~\ref{cor: intro-solution-to-generalized-ansatz}.

    \begin{proof}[Proof of Corollary~\ref{cor: intro-solution-to-generalized-ansatz}]
    Let $P$ be a standard symmetric simplex with barycenter the origin, by Theorem~\ref{thm: main-theorem}, there is a function $v\in C^{1, \alpha}(\overline P)\cap C^{\infty}(P)$ which solves equation~\eqref{eqn: main-equation}. Moreover by Proposition~\ref{prop: phi-solves-gen-Calabi-ansatz}, if we define $\varphi$ its homogenization as in \eqref{eq: homogenization}, then $\varphi$ solves 
    \begin{equation}
    \begin{cases}
        \left(\frac{\partial \varphi}{\partial x_{n+1}}\right)^k\det D^2\varphi = const \text{ in }C(P)\\
        \frac{\partial \varphi}{\partial x_{n+1}}= 0 \text{ on }\partial C(P). 
    \end{cases}
    \end{equation}
    Since $P$ is the standard symmetric simplex, there is an orthogonal transformation $T:\mathbb R^{n+1}\to \mathbb R^{n+1}$ which maps $C(P)$ to the positive orthant and $T(0, \ldots, 0, 1) = (\frac{1}{\sqrt{n+1}}, \ldots, \frac{1}{\sqrt{n+1}})$. If we let $\tilde\varphi(x) = \varphi(T^{-1}(x))$, then by straightforward calculation, we see that $\tilde\varphi$ solves
    \begin{equation}
    \begin{cases}
        (\sum_{k = 1}^{n+1}\frac{\partial \tilde \varphi}{\partial x_i})^k\det D^2\tilde\varphi = const \text{ in }C(P)\\
        \sum_{k = 1}^{n+1}\frac{\partial \tilde \varphi}{\partial x_i}= 0 \text{ on }\partial C(P). 
    \end{cases}
    \end{equation}
    In particular, $\tilde\varphi$ is a solution of the generalized Calabi ansatz from \cite[Lemma 2.1]{Collins-Li}. By simply rescaling, we obtain a solution of \eqref{eq: genearlized-calabi-ansatz}. 
    \end{proof}
    Let $\varphi$ be the solution to \eqref{eq: genearlized-calabi-ansatz} obtained from the Corollary~\ref{cor: intro-solution-to-generalized-ansatz}. Then the following proposition gives a precise description of the boundary behavior of $\varphi$ near a smooth point of the boundary. 
    \begin{prop}\label{thm: boundary-expansion}
        Let $\bold x\in \partial \mathbb R_+^{n+1}$ be a point on the top dimensional stratum of the boundary. Without loss of generality, let us assume that $\bold x = (0, \bold x_2, \ldots, \bold x_{n+1})$ where $\bold x_{i}>0$ for $i = 2, \ldots, n+1$. Then for any $0<\alpha<1$, there exists a positive definite matrix $P\in Mat_+(n\times n)$ and $p>0$ such that
        \begin{align}\label{eqn: expansion-phi}
        \varphi(x) &= \varphi(\bold x)+\langle\nabla \varphi(\bold x), x-\bold x\rangle + \sum_{i, j=2}^{n+1}P_{ij}(x_i-\bold x_i-x_1+\bold x_1)(x_j-\bold x_j-x_1+\bold x_1) + p(x_1-\bold x_1)^{1+\frac{1}{k+1}}\\
        &\qquad + O\left((|x'-\bold x'|^2+|x_1-\bold x_1|^{1+\frac{1}{k+1}})^{1+\frac{\beta}{2}}\right). 
        \end{align}
        In particular, this implies the restriction of $\varphi$ to the boundary $\partial \mathbb R^{n+1}_+$ is in $C^{2, \alpha}(\partial \mathbb R^{n+1}_+\cap \{x_2>0, \ldots, x_{n+1}>0\})$ for any $0<\beta<1$. 
    \end{prop}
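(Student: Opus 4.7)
The plan is to recognize $\nabla\varphi$ locally near $\bold x$ as the Brenier map of a degenerate optimal transport problem between flat-boundary domains, and then extract the refined expansion by combining the Jhaveri--Savin boundary regularity with a blow-up / iteration scheme.

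First, the optimal transport picture. The equation $(\sum_i\partial_i\varphi)^k\det D^2\varphi=c$ together with $\nabla\varphi((\mathbb{R}_+)^{n+1})\subset\{\sum_iy_i>0\}$ and $\sum_i\partial_i\varphi|_{\partial}=0$ says that $\nabla\varphi$ is the Brenier map from $(\mathbb{R}_+)^{n+1}$ with Lebesgue source density onto $\{\sum_iy_i>0\}$ with target density proportional to $(\sum_iy_i)^k$. Near $\bold x=(0,\bold x_2,\ldots,\bold x_{n+1})$ the source is locally the half-space $\{x_1>0\}$ with non-degenerate density, while $\bold y:=\nabla\varphi(\bold x)$ lies on the flat target boundary $\{\sum_iy_i=0\}$ near which the density vanishes like the $k$-th power of the distance. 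This is precisely the degenerate setup treated in Jhaveri--Savin, whose theorem gives $\varphi\in C^{1,\alpha}$ up to $\{x_1=0\}$ near $\bold x$.

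Next, subtract the affine part. Set $w(x):=\varphi(x)-\varphi(\bold x)-\nabla\varphi(\bold x)\cdot(x-\bold x)$; then $w$ is convex, vanishes with zero gradient at $\bold x$, and still solves the same Monge--Amp\`ere equation. Tangentially differentiating the boundary identity $\sum_i\partial_i\varphi\equiv 0$ along $\{x_1=0\}$ at $\bold x$ yields $\sum_i\partial_i\partial_jw(\bold x)=0$ for $j=2,\ldots,n+1$; equivalently $D^2w(\bold x)\mathbf{1}\in\mathbb{R}e_1$. This algebraic constraint is precisely what forces the quadratic part of $w$ to take the form $\sum_{i,j\geq 2}P_{ij}\bigl((x_i-\bold x_i)-x_1\bigr)\bigl((x_j-\bold x_j)-x_1\bigr)$, since these combinations span the tangential directions modulo the vanishing eigendirection $\mathbf{1}$. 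The pure normal profile is determined by the one-dimensional model $(\psi')^k\psi''=c$, whose explicit solution $\psi(t)=p\,t^{1+1/(k+1)}$ furnishes the claimed fractional term.

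To show that $w$ is actually approximated by the sum of these two ingredients with the stated polynomial rate, I would carry out an anisotropic blow-up. Rescaling
\begin{equation*}
w_\lambda(x',x_1):=\lambda^{-(1+1/(k+1))}\,w\bigl(\bold x+\lambda^{\tfrac{1}{2}+\tfrac{1}{2(k+1)}}\,x',\,\lambda x_1\bigr)
\end{equation*}
places the fractional normal and the quadratic tangential parts on equal footing. The Jhaveri--Savin $C^{1,\alpha}$ bound transfers across this rescaling, so $\{w_\lambda\}$ is precompact as $\lambda\to 0$, and any subsequential limit must be a convex solution of the same degenerate Monge--Amp\`ere equation on $\{x_1>0\}$ satisfying the boundary condition together with $w_0(0)=0$ and $\nabla w_0(0)=0$; a separation-of-variables argument identifies $w_0$ with the quadratic-plus-fractional model determined by $(P,p)$.

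The main technical obstacle is upgrading this qualitative blow-up convergence to the stated polynomial error bound. This requires a Dini-type one-scale improvement together with uniqueness of the blow-up limit inside the admissible class of convex solutions with the prescribed boundary behavior, and the stable propagation of Jhaveri--Savin boundary estimates across the anisotropic rescalings. Once a single improvement step is available, the full expansion follows by dyadic iteration and summation of a geometric series in the remainder. Finally, setting $x_1=0$ in the resulting expansion immediately yields the asserted $C^{2,\beta}$ regularity of $\varphi|_{\partial\mathbb{R}^{n+1}_+}$ on the top-dimensional stratum, since the fractional $x_1$-term vanishes identically on the face.
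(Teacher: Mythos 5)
Your strategy is the same as the paper's: recast the problem in the Jhaveri--Savin framework of optimal transport with degenerate densities between flat-boundary domains, subtract the affine part, perform the anisotropic blow-up (your rescaling agrees with the paper's after the reparametrization $t=\lambda^{(k+2)/(k+1)}$), identify the limit as quadratic-plus-fractional, and then upgrade to a polynomial error rate. However, you leave the decisive step as a program rather than a proof: the passage from qualitative blow-up convergence to the error bound $O\bigl((|\tilde x'|^2+|\tilde x_1|^{1+\frac{1}{k+1}})^{1+\frac{\beta}{2}}\bigr)$ is exactly where the content lies, and saying it ``requires a Dini-type one-scale improvement together with uniqueness of the blow-up limit'' names the obstacle without overcoming it. The paper fills this gap by quoting two quantitative inputs from Jhaveri--Savin that you do not invoke: (i) the Section~4 estimates $\frac{1}{M}\mathrm{Id}\leq D^2_{\tilde x'}\varphi\leq M\,\mathrm{Id}$ and, crucially, the H\"older continuity of $\varphi_{\tilde x_1}/\tilde x_1^{1/(k+1)}$ up to the boundary --- this last estimate, not the mere $C^{1,\alpha}$ bound, is what forces every subsequential blow-up limit to have normal derivative exactly $f(0)\tilde x_1^{1/(k+1)}$ and hence, via their Liouville theorem, pins the limit down uniquely as $P(\tilde x')+\tfrac{k+1}{k+2}f(0)\tilde x_1^{(k+2)/(k+1)}$; and (ii) their Proposition~5.1, which is precisely the one-step-improvement-plus-iteration machinery you describe, packaged as a citable result. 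Without these, your compactness argument produces a limit but neither identifies it uniquely nor yields a rate.

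Two smaller points. First, your derivation of $D^2w(\bold x)\mathbf{1}\in\mathbb{R}e_1$ by tangentially differentiating $\sum_i\partial_i\varphi\equiv 0$ at $\bold x$ is only formal: $\varphi$ is merely $C^{1,\alpha}$ up to the face (the normal second derivative blows up like $x_1^{1/(k+1)-1}$), so the pointwise Hessian you differentiate does not exist a priori. The paper sidesteps this by the linear change of variables $\tilde x_1=x_1$, $\tilde x_j=x_j-x_1$, which turns the degenerate direction into a coordinate direction \emph{before} any regularity is claimed; the form of the quadratic part then comes out of the blow-up limit rather than from differentiating the boundary condition. Second, your appeal to a one-dimensional ODE model for the fractional profile is heuristic; in the actual argument the coefficient $p$ is $\tfrac{k+1}{k+2}f(0)$ with $f=\varphi_{\tilde x_1}/\tilde x_1^{1/(k+1)}$, and its positivity comes from the Jhaveri--Savin estimates, not from separation of variables.
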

    \begin{proof}
        Consider the change of variables 
        \[(\tilde x_1, \ldots, \tilde x_{n+1}) = (x_1, x_2-x_1, \ldots, x_{n+1}-x_1).\]
        Then with respect to this new coordinate system $(\tilde x_1, \ldots, \tilde x_{n+1})$, $\varphi$ satisfies
        \begin{equation}
		\begin{cases}
			(\det D^2\varphi)(\varphi_{\tilde x_1})^k&=1 \text{ in }\{\tilde x_1>0, \tilde x_2>-\tilde x_1, \ldots, \tilde x_{n+1}>-\tilde x_1\}\\
			\varphi_{\tilde x_1}&= 0 \text{ on }\partial \{\tilde x_1>0, \tilde x_2>-\tilde x_1, \ldots, \tilde x_{n+1}>-\tilde x_1\}. 
		\end{cases}
	\end{equation}
        Since $u$ is strictly convex, there exists a compactly supported section $S_{h}(\bold x)$ of $\bold x$, such that $\partial \{\tilde x_1>0, \tilde x_2>-\tilde x_1, \ldots, \tilde x_{n+1}>-\tilde x_1\}\cap \overline S_h = \overline S_h\cap \{\tilde x_1 = 0\}$. 
        
        Without loss of generality, let us assume that $\tilde{\bold x} = (0, \ldots, 0)$ by a translation of the coordinates. Then by the results of Jhaveri and Savin \cite[Section 4]{Jhaveri-Savin}, we have
        \[\frac 1 M\text{Id}\leq D^2_{\tilde x'}\varphi \leq M\text{Id on }C_r\cap \{\tilde x_1\geq 0\},\]
        and there exist $\sigma>0$ such that 
        \[\frac{\varphi_{\tilde x_1}(\tilde x)-\varphi_{\tilde x_1}(0)}{\tilde x_1^{\frac{1}{k+1}}}\in C^{\sigma}(C_r\cap \{\tilde x_1\geq 0\})\] 
        where $C_r$ is a neighborhood of $0$ of the form
        \[C_r =  (-r^{\frac{k+1}{k+2}}, r^{\frac{k+1}{k+2}})\times B_{r^{1/2}}(0)\]
        for some small enough $r>0$. Denote 
        \[\tilde\varphi(\tilde x) := \varphi(\tilde x)-\varphi(0)-\langle\tilde x, \nabla\varphi(0)\rangle, \]
        which satisfies $\tilde\varphi(0) = |\nabla \tilde\varphi(0)| = 0$.
        Since $f(x) = \frac{\tilde\varphi_{\tilde x_1}(\tilde x)}{\tilde x_1^{\frac{1}{k+1}}}$ is a well-defined H\"older continuous function on $C_r\cap \{\tilde x_1\geq 0\}$, it has a value at $0$ and
        \[|f(x_1, x') - f(0)|\leq C(|x_1|^{\sigma}+|x'|^{\sigma})\]
        which implies 
        \[\tilde\varphi_{\tilde x_1}(\tilde x) = f(0)\tilde x_1^{\frac{1}{k+1}}+O(|\tilde x_1|^{\frac{1}{k+1}+\sigma})+O(|x'|^{\sigma}|\tilde x_1|^{\frac{1}{k+1}}). \]
        Integrating this from $(\tilde x_1, \tilde x')$ to $(0, \tilde x')$ gives
        \[\tilde\varphi(\tilde x_1, \tilde x')-\tilde\varphi(0, \tilde x') = \frac{k+1}{k+2}f(0)\tilde{x_1}^{1+\frac{1}{k+1}}+O(|\tilde x_1|^{1+\frac{1}{k+1}+\sigma})+O(|x'|^{\sigma}|\tilde x_1|^{1+\frac{1}{k+1}}) \]
        and moreover since $\frac 1 M\text{Id}\leq D^2_{\tilde x'}\tilde\varphi \leq M\text{Id}$, we obtain
        \[\frac 1 C|\tilde x'|^2\leq \tilde\varphi(0, \tilde x') \leq C|\tilde x'|^2, \]
        which implies on $C_r\cap \{\tilde x_1\geq 0\}$, we have
        \[\frac 1 C|\tilde x'|^2+\frac{k+1}{k+2}f(0)\tilde{x_1}^{1+\frac{1}{k+1}}+O(|\tilde x|^{\sigma}|\tilde x_1|^{1+\frac{1}{k+1}})\leq \tilde\varphi(\tilde x_1, \tilde x') \leq  C|\tilde x'|^2+\frac{k+1}{k+2}f(0)\tilde x_1^{1+\frac{1}{k+1}}+O(|\tilde x|^{\sigma}|\tilde x_1|^{1+\frac{1}{k+1}}). \]
        If we define the rescalings
        \[\tilde\varphi_t(\tilde x_1, \tilde x') = \frac{\tilde\varphi(t^{\frac{k+1}{k+2}}\tilde x_1, t^{\frac 1 2}\tilde x')}{t}\]
        then $u_t$ also satisfy
        \[(\frac{\partial \tilde\varphi_t}{\partial \tilde x_1})^k\det D^2\tilde\varphi_t = 1, \]
        with 
        \[\frac{\partial \tilde\varphi_t}{\partial \tilde x_1} = 0 \text{ on }C_{rt^{-1}}\cap \{\tilde x_1 = 0\}\]
        and for any $R>0$, we have
        \[\frac 1 C|\tilde x'|^2+\frac{k+1}{k+2}f(0)\tilde x_1^{\frac{k+2}{k+1}}+O(t^{\frac{\sigma}{2}})\leq \tilde\varphi_t \leq C|\tilde x'|^2+\frac{k+1}{k+2}f(0)\tilde x_1^{\frac{k+2}{k+1}}+O(t^{\frac{\sigma}{2}}) \]
        on $C_{R}\cap \{\tilde x_1\geq 0\}$. Hence by \cite{Caffarelli}, there exist a subsequence $t_j\to 0$ such that $\tilde\varphi_{t_j}$ converges in $C^{\infty}_{loc}(C_{R}\cap \{\tilde x_1>0\})$ for any $R>0$, to a function $\tilde\varphi_{0}:\mathbb R_+ \times \mathbb R^n$ which satisfies 
        \[(\frac{\partial \tilde\varphi_0}{\partial \tilde x_1})^k\det D^2\tilde\varphi_0 = 1, \]
        Moreover since
        \[\frac{\frac{\partial \tilde\varphi_t}{\partial \tilde x_1}}{\tilde x_1^{\frac{1}{k+1}}} = \frac{\tilde\varphi_{\tilde x_1}(t^{\frac{k+1}{k+2}}\tilde x_1, t^{\frac 1 2}\tilde x')}{(t^{\frac{k+1}{k+2}}\tilde x_1)^{\frac{1}{k+1}}} = f(t^{\frac{k+1}{k+2}}\tilde x_1, t^{\frac 1 2}\tilde x'), \]
        it follows by taking a limit as $t\to 0$ that 
        \[\frac{\partial \tilde\varphi_0}{\partial \tilde x_1} = f(0)\tilde x_1^{\frac{1}{k+1}}, \]
        which implies by the Liouville theorem \cite[Theorem 1.4]{Jhaveri-Savin} that 
        \[\tilde\varphi_0(x) = P(\tilde x')+\frac{k+1}{k+2}f(0)\tilde x_1^{\frac{k+2}{k+1}}\]
        where $P$ is a uniformly convex quadratic function. It follows that for any $\eps>0$, there exist sufficient small $t_j$, such that
        \[\sup_{C_1\cap \{\tilde x_1\geq 0\}}|\tilde\varphi_0-\tilde\varphi_{t_j}|\leq \eps, \]
        and hence by \cite[Proposition 5.1]{Jhaveri-Savin}, for any $0<\beta<1$, there exist a linear transformation $A = \text{diag}(a_1, A')$ such that
        \[\tilde\varphi_t(A\tilde x) = \tilde\varphi_0(\tilde x)+O\left((|\tilde x'|^2+|\tilde x_1|^{1+\frac{1}{k+1}})^{1+\frac{\beta}{2}}\right). \]
        for $t>0$ sufficiently small, which proves the result. 
    \end{proof}
	Given the expansion of $\varphi$, we are ready to prove Theorem~\ref{thm: boundary-regularity}. 
	
	\begin{proof}[Proof of Theorem~\ref{thm: boundary-regularity}]
            We define $\varphi$ to be the homogenous extension of $v$ as in Proposition~\ref{prop: phi-solves-gen-Calabi-ansatz}, then we see that 
            \[v(y_1, \ldots, y_n) = \varphi^{\frac{n+1+k}{2n+2+k}}(y_1, \ldots, y_n, 1-\sum_{i=1}^ny_i) \]
            where $\varphi$ satisfy equation~\ref{eq: genearlized-calabi-ansatz}. By Proposition~\ref{thm: boundary-expansion}, we know that $\varphi$ admits an expansion of the form \eqref{eqn: expansion-phi} near $\bold x$. Plugging this expansion into the equation above gives 
            \begin{equation}
                v(y) = v(\bold y)+\nabla v(\bold y)\cdot(y-\bold y)+p'x_1^{1+\frac{1}{k+1}}+ \sum_{i, j = 2}^nQ_{ij}(y_i-\bold y_i)(y_j-\bold y_j)+O\left(\left(|y|^2+y_1^{1+\frac{1}{k+1}}\right)^{1+\frac{\alpha}{2}}\right)
            \end{equation}
            for some $\alpha>0$. 
	\end{proof}	
        Finally, we prove a basic global convexity property for the free boundary.
        \begin{prop}
            Suppose $u$ solves~\eqref{eqn: legendre-transform-equation}.  Then the free boundary $\{u=0\}$ is strictly convex, in the sense that it does not contain a line segment.
            \end{prop}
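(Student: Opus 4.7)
The plan is to argue by contradiction via Legendre--Fenchel duality, exploiting the single-valuedness of subgradients coming from the $C^{1,\alpha}(\mathbb{R}^n)$ regularity of $u$ (Theorem~\ref{thm: u-solve-equation}). Suppose for contradiction that $\partial\Omega = \{u = 0\}$ contains a nontrivial segment $[x_0, x_1]$ with $x_0 \neq x_1$. My central object will be the convex function $f_x(y) := v(y) - \langle x, y\rangle$ on $\overline P$, which by the definition of the Legendre transform $u$ satisfies $f_x \geq -u(x)$ with equality at $y = \nabla u(x)$. When $u(x) = 0$, this makes $\min_{\overline P} f_x = 0$, and single-valuedness of $\partial u(x) = \{\nabla u(x)\}$ (from the $C^1$ regularity) will force $\nabla u(x)$ to be the \emph{unique} minimizer of $f_x$.

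The first step is to deduce $\nabla u(x_0) = \nabla u(x_1)$ by convexity in the $x$-parameter. For $t \in (0,1)$, the intermediate point $x_t := (1-t)x_0 + tx_1 \in \partial\Omega$ satisfies $u(x_t) = 0$ and hence $\min_{\overline P} f_{x_t} = 0$. The identity $f_{x_t} = (1-t)f_{x_0} + t f_{x_1}$ together with $f_{x_i} \geq 0$ will force any minimizer $y^*$ of $f_{x_t}$ to zero out both $f_{x_0}$ and $f_{x_1}$ simultaneously; by uniqueness, $y^* = \nabla u(x_0) = \nabla u(x_1)$. Write $y^*$ for this common value. Note $y^* \in \partial P$, since interior regularity (Caffarelli) makes $\nabla u\colon \Omega \to P$ a diffeomorphism, so the image of $\mathbb R^n\setminus\Omega$ under $\nabla u$ lies in $\partial P$.

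The second step will reduce $\nabla u(x_0) = \nabla u(x_1) = y^*$ to $x_0 = x_1$ using the free boundary condition $v^\star|_{\partial P} = 0$. The inequality $\langle x_i, y\rangle - v(y) \leq u(x_i) = 0 = \langle x_i, y^*\rangle - v(y^*)$ for all $y \in \overline P$ identifies $x_i$ as a subgradient at $y^*$ of the extension of $v$ by $+\infty$ outside $\overline P$. Since $v \in C^1(\overline P)$, a standard computation gives the decomposition $x_i = \nabla v(y^*) + \nu_i$ with $\nu_i$ in the outer normal cone $N_P(y^*)$. Using $v^\star(y^*) = \langle y^*, \nabla v(y^*)\rangle - v(y^*) = 0$, I then compute
\[
0 = u(x_i) = \langle x_i, y^*\rangle - v(y^*) = v^\star(y^*) + \langle \nu_i, y^*\rangle = \langle \nu_i, y^*\rangle.
\]
The assumption $0 \in \operatorname{int}(P)$ supplies $\delta > 0$ with $B_\delta(0) \subset P$; testing the defining inequality $\langle \nu, y - y^*\rangle \leq 0$ of $N_P(y^*)$ at $y = \delta\nu/|\nu|$ then gives $\langle \nu, y^*\rangle \geq \delta|\nu|$ for every $\nu \in N_P(y^*)$. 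Combined with $\langle \nu_i, y^*\rangle = 0$ this forces $\nu_i = 0$, so $x_0 = \nabla v(y^*) = x_1$, the desired contradiction.

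The main technical ingredient is the identification $\partial g(y^*) = \nabla v(y^*) + N_P(y^*)$ for the $+\infty$-extension $g$ of $v$, which depends on the $C^{1,\alpha}(\overline P)$ boundary regularity of $v$ coming from Jhaveri--Savin via Theorem~\ref{thm: u-solve-equation}. Beyond this, the argument is elementary convex duality, reducing strict convexity of the free boundary to two rigidity statements: uniqueness of Fenchel minimizers (from $u \in C^{1,\alpha}$) and the sign estimate $\langle \nu, y^*\rangle \geq \delta|\nu|$ for $\nu \in N_P(y^*)\setminus\{0\}$ (from $0 \in \operatorname{int}(P)$).
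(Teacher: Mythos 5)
Your proof is correct, but it takes a genuinely different route from the paper's. The paper argues measure-theoretically: it checks that the Monge--Amp\`ere measure $(u^\star)^{n+2}(-u)_+^k$ is doubling on $\{u\le 0\}$, invokes Caffarelli's centered sections and the associated balancing lemma, and derives a contradiction from how the sections $S_h^c$ centered at points of the putative segment behave as $h\to 0$. Your argument instead runs through convex duality: affineness of $u$ on the segment plus $u\in C^1(\mathbb{R}^n)$ forces a common gradient $y^\star=\nabla u(x_0)=\nabla u(x_1)$; duality then gives $x_i\in\nabla v(y^\star)+N_P(y^\star)$; and the free-boundary condition $v^\star(y^\star)=0$ together with $0\in\operatorname{int}P$ (which yields $\langle\nu,y^\star\rangle\ge\delta|\nu|$ on the normal cone) annihilates the normal-cone components, forcing $x_0=x_1$. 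Your route is shorter and isolates exactly which structural features force strict convexity of the free boundary --- the boundary condition and the position of the origin --- but it sits logically downstream of the $C^{1,\alpha}$ regularity of both $u$ and $v$ from Theorem~\ref{thm: u-solve-equation}, whereas the paper's argument needs only doubling and boundedness of $u$ and so survives at the level of Alexandrov solutions with no a priori $C^1$ regularity. Two points you should make explicit: (i) the pointwise condition $v^\star|_{\partial P}=0$ is not literally written in~\eqref{eqn: legendre-transform-equation}, so either cite the asserted equivalence with~\eqref{eqn: main-equation} or derive it (if $v^\star(y^\star)<0$ then $\nabla v(y^\star)\in\Omega$, where $u$ is smooth and strictly convex, so $\nabla u$ is open there and $y^\star$ would lie in the interior of $P$); (ii) the subdifferential identity $\partial\bigl(v+\iota_{\overline P}\bigr)(y^\star)=\nabla v(y^\star)+N_P(y^\star)$ deserves its one-line directional-derivative justification, which is precisely where $v\in C^1(\overline P)$ enters.
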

            \begin{proof}
                This result is closely related to a result of Savin~\cite{SavinObs} concerning the structure of the free boundary in the obstacle problem.  We sketch the details for the reader's convenience.  First, it is straightforward to check directly that the measure
                \[
                d\mu = (u^\star)^{n+2}(-u)^k_{+}
                \]
                is doubling on $\{u\leq 0\}$.  Precisely, there is a constant $C>1$, depending on $n,k$ and a constant $A>0$
                \[
                A^{-1} \leq \inf_{\{u <0\}} u^{\star} \leq \sup_{\{u<0\}} u^{\star} \leq A
                \]
                such that, for any convex set $K$ with center of mass $x_0 \in \{u\leq 0\}$ we have
                \[
                \mu(K) \leq C\mu(\frac{1}{2}K)
                \]
                where $\frac{1}{2}K$ denotes the rescaling of $K$ about $x_0$; see \cite[Lemma 2.6]{Jhaveri-Savin} for a general result.  Since $u$ is bounded in compact sets and the graph of $u$ does not contain a line, it follows from \cite[Lemma 1]{Caffarelli2} that, for any point $x_0\in \{x=0\}$ and any $h>0$ we can find $p_h \in \mathbb{R}^n$ such that the section
                \[
                S_{h}^c(x_0) = \{ y: u(y) < h+ p_h\cdot (y-x_0)\}
                \]
                is non-empty, bounded and has $x_0$ as center of mass.  Thanks to the mass doubling property, we obtain from \cite[Lemma 3]{Caffarelli2} (see also \cite[Lemma 2.7]{SavinObs}) the following balancing property 
                \begin{lem}\label{lem: balancing}
                    Suppose $x \in S_{h}^{c}(x_0)$ has $u(x)\leq p_{h}\cdot(x-x_0)$, then $S_{h}^{c}(x_0)$ is balanced around $x$ in the sense that
                    \[
                    (1+\eta)x-\eta S_{h}^{c}(x_0) \subset S_{h}^{c}(x_0)
                    \]
                    for $\eta$ depending on the doubling constant and $n$.  In other words, if $[y_1,y_2]$ is a line segment passing through $x$, with $y_i \in \del S_{h}^{c}(x_0)$, then
                    \[
                    \frac{|y_1-x|}{|y_2-x|} >\eta.
                    \]
                \end{lem}
                Suppose, for the sake of a contradiction, that $\{u=0\}$ contains a line segment.  By rotation and translation, we may assume that this segment is given by $te_n$ for $t\in [0,A]$, and $u(te_n)>0$ for $t<0$, and $t>A$.  Let $0<a\ll A$, and let 
                \[
                S_{h}^c(a) = \{y: u(y) < h+ p_{h,a}\cdot (y-ae_n)\}
                \]
                be the section centered at $ae_n$.  First we claim that for $h$ small we have $p_{h,a}\cdot e_n <0$.  Indeed, if $p_{h,a}\cdot e_n \geq 0$, then $Ae_n\in S_{h}^c(a)$, which, by balancing implies $-\eta \frac{A}{2}e_n \in S^c_{h}(a)$.  But $u(-\eta \frac{A}{2}e_n) >\delta$ for some $\delta>0$, contradicting $p_{h,a}\cdot e_n <0$ if $h \ll \delta$.

                Thus, for all $h$ sufficiently small, $S_{h}^{c}(a)$ is balanced around $0$ by Lemma~\ref{lem: balancing}. Define
                \[
                \begin{aligned}
                t_{h,a}&:= \sup\{ t>a : te_n \in S^c_{h}(a)\}\\
                b_{h,a} &:= \sup \{ t>0: -te_{n} \in S^{c}_{h}(a)\}
                \end{aligned}
                \]
                Since $S_{h}^c(a)$ is balanced around $0$ and $t_{h,a}>a$ we see that $b_{h,a} >\eta a >0$ for all $h$ small.  This implies a uniform lower bound $u(b_{h,a})> \delta'>0$.  It follows that $t_{h,a} \rightarrow a$ as $h\rightarrow 0$, contradicting balancing around $a$.

            \end{proof}
            {\bf Acknowledgements:} F.T. is supported by the Center for Mathematical Sciences and Applications at Harvard University, which he would like to thank for its generous support. F.T. is also grateful to Nick McCleerey for helpful discussions. T.C.C is supported in part by NSF CAREER grant DMS-1944952.

\end{document}